\theoremstyle{plain}
\newtheorem{theorem}{Theorem}[section]
\newtheorem{proposition}[theorem]{Proposition}
\newtheorem{lemma}[theorem]{Lemma}
\newtheorem{corollary}[theorem]{Corollary}
\theoremstyle{definition}
\newtheorem{remark}[theorem]{Remark}
\newcommand{\Affine}{\mathbb{A}}
\newcommand{\CC}{\mathbb{C}}
\newcommand{\PP}{\mathbb{P}}
\newcommand{\ZZ}{\mathbb{Z}}
\newcommand{\I}{\mathcal{I}}
\newcommand{\J}{\mathcal{J}}
\newcommand{\Osh}{\mathcal{O}}
\newcommand{\q}{\mathfrak{q}}
\newcommand{\BN}{\mathrm{BN}}
\newcommand{\Hilb}{\mathrm{Hilb}}
\newcommand{\Hor}{\mathrm{Hor}}
\newcommand{\pr}{\mathrm{pr}}
\newcommand{\red}{\mathrm{red}}
\newcommand{\Ver}{\mathrm{Ver}}
\newcommand{\bip}{\operatorname{b}}
\newcommand{\csi}{\operatorname{\xi}}
\newcommand{\euler}{\operatorname{\chi}}
\newcommand\HH{\operatorname{H}}
\newcommand{\length}{\operatorname{length}}
\newcommand{\Spec}{\operatorname{Spec}}
\newcommand{\lra}{\longrightarrow}
\begin{document}

\setdefaultleftmargin{21pt}{}{}{}{}{}

\title[On the Hilbert Function of a Finite Scheme]
{On the Hilbert Function of a Finite Scheme Contained in a Quadric Surface}

\author{Mario Maican}
\address{Institute of Mathematics of the Romanian Academy, Calea Grivitei 21, Bucharest 010702, Romania}

\email{maican@imar.ro}

\begin{abstract}
Consider a finite scheme of length $l$ contained in a smooth quadric surface over the complex numbers.
We determine the number of linearly independent curves passing through the scheme, of degree at least $l - 1$.
\end{abstract}

\subjclass[2010]{Primary 14C05, 14F17}
\keywords{Topological Euler characteristic, Flag Hilbert schemes, Brill-Noether loci}

\maketitle

\section{Introduction}
\label{introduction}

\noindent
All schemes considered in this paper will be over $\CC$.
We shall work on the smooth quadric surface $\PP^1 \times \PP^1$.
We choose homogeneous coordinates $(x : y)$ on the first $\PP^1$ and $(z : w)$ on the second $\PP^1$.
Given a positive integer $l$, we denote by $\Hilb(l)$ the Hilbert scheme of $l$ points in $\PP^1 \times \PP^1$.
Given non-negative integers $m$ and $n$, which are not both zero, we denote by $\Hilb(l, (m, n))$
the flag Hilbert scheme parametrizing pairs $(Z, C)$,
where $C \subset \PP^1 \times \PP^1$ is a curve of bidegree $(m, n)$
and $Z \subset C$ is a finite subscheme of length $l$.
Given $Z \in \Hilb(l)$ we define its Hilbert function by analogy
with the Hilbert function of a finite subscheme of the projective space, namely
\[
\ZZ_{\ge 0}^{} \times \ZZ_{\ge 0}^{} \ni (m, n) \longmapsto \dim_\CC^{} \HH^0(\Osh(m, n)) - \dim_\CC^{} \HH^0(\I_Z^{}(m, n)).
\]
In this paper we determine the values of the Hilbert function of $Z$ for $m + n \ge l - 1$, see Theorem~\ref{BN}.
As a consequence, we compute the topological Euler characteristic of $\Hilb(l, (m, n))$, see Theorem~\ref{HE}.
The answer is given in terms of the Euler characteristic of $\Hilb(l)$,
which was computed in \cite{ellingsrud},
and in terms of the numbers $\csi(l, m)$ and $\csi(l, n)$,
which can be computed by an easy algorithm.
The proof of Theorem~\ref{BN} rests on the Vanishing Theorem~\ref{vanishing},
which relies on the fact that there are curves of arithmetic genus zero and arbitrary degree in $\PP^1 \times \PP^1$.
For $Z$ contained in such a curve,
an analysis can be performed, as in section~\ref{vanishing_theorem}.

The paper is organized as follows.
In section~\ref{line} we recall a few basic facts concerning the cohomology of a finite subscheme of a multiple line.
In section~\ref{punctual}, following Ellingsrud and Str{\o}mme \cite{ellingsrud},
we review the geometry of the punctual Hilbert scheme of a surface.
This will be needed in section~\ref{euler},
which is devoted to the calculation of the Euler characteristic of the flag Hilbert scheme.
Sections~\ref{vanishing_theorem} and \ref{loci} are occupied by the Vanishing Theorem, respectively,
by the description of the Brill-Noether loci in $\Hilb(l)$,
i.e.\ the loci on which the dimension of the fibers of the forgetful morphism
$\Hilb(l, (m, n)) \to \Hilb(l)$ jumps.

\section{Finite schemes contained in a multiple line}
\label{line}

\noindent
In this section we collect several well-known facts about finite subschemes of a multiple projective line.
Let $\nu \ge 1$ be an integer.
We denote by $\nu L$ the multiple projective line $\Spec \CC[z]/ (z^\nu) \times \PP^1$.
Given an integer $m$, we write $\Osh_{\nu L}^{}(m) = \pr_2^* \Osh_{\PP^1}^{}(m)$.

\begin{proposition}
\label{ZL}
Let $Z \subset \nu L$ be a zero-dimensional subscheme.
Consider the restricted scheme $Z_0^{} = Z \cap L$.
We claim that the ideal sheaf of $Z_0^{}$ in $Z$ is isomorphic, as an $\Osh_Z^{}$-module,
to the structure sheaf of a finite scheme $Z' \subset (\nu - 1) L$.
\end{proposition}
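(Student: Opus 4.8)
The plan is to work locally and reduce to a statement about modules over the local ring of $\nu L$ at a point. Since $Z$ is zero-dimensional, it is concentrated at finitely many closed points of $\nu L$, all of which lie on the reduced line $L$ (the underlying topological space of $\nu L$ is just $\PP^1$). So it suffices to treat a single point $p \in L$ and the corresponding Artinian quotient of the local ring $R = \Osh_{\nu L, p}$. After choosing a local coordinate $t$ on $\PP^1$ at the image of $p$, we have $R = \CC[t]_{(t)}[z]/(z^\nu)$, or after completing, $R = \CC[[t]][z]/(z^\nu)$; multiplication by $z$ is a well-defined $R$-linear endomorphism of any $R$-module. The scheme $Z$ corresponds to an ideal $I \subset R$ with $A := R/I$ finite over $\CC$, and $Z_0 = Z \cap L$ corresponds to the image of $I + (z)$, i.e.\ to the quotient $A/zA$. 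The ideal sheaf of $Z_0$ in $Z$ is then, locally, the submodule $zA \subset A$.

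The key step is to identify $zA$, as an $A$-module hence as an $R$-module, with the structure sheaf of a finite subscheme of $(\nu-1)L$. First I would observe that annihilation of $zA$ by $z^{\nu-1}$ is automatic, since $z^\nu = 0$ in $R$; thus $zA$ is a module over $R' := R/(z^{\nu-1}) = \CC[[t]][z]/(z^{\nu-1})$, the completed local ring of $(\nu-1)L$. It remains to show that $zA$ is a \emph{cyclic} $R'$-module, i.e.\ generated by a single element; then $zA \cong R'/J$ for some ideal $J$, and $R'/J$ being finite-dimensional over $\CC$, it is the structure sheaf of a finite scheme $Z' \subset (\nu-1)L$. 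Cyclicity should follow from the structure of $A$: writing $A$ via generators and relations over $\CC[[t]]$, the multiplication-by-$z$ map $A \to A$ has image $zA$, and I expect the image to be cyclic because $z \cdot A$ is spanned, as a $\CC[[t]]$-module after the obvious reductions, by the $z$-multiples of a single generator of $A$ as an algebra. More carefully: $A$ is a quotient of $R$, which is generated as an $\Osh_{L,p}$-algebra (equivalently as a $\CC[[t]]$-algebra) by $z$; hence $A$ is generated over $\CC[[t]]$ by $1, z, \dots, z^{\nu-1}$, and so $zA$ is generated over $\CC[[t]]$ by $z, z^2, \dots, z^{\nu-1}$, which visibly equals the $R'$-submodule generated by $\overline z \in zA$. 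Running this over all points $p$ and patching the resulting finite schemes gives the global $Z'$, and the isomorphism $\I_{Z_0/Z} \cong \Osh_{Z'}$ of $\Osh_Z$-modules is the sheafification of the local isomorphisms, which are compatible because they are all induced by multiplication by $z$.

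The main obstacle I anticipate is making the cyclicity argument clean and coordinate-free enough to globalize without fuss. In particular one must be a little careful that the $R'$-module structure on $zA$ — and the identification $zA = R' \cdot \overline z$ — does not depend on the choice of local coordinate $t$, and that the local isomorphisms $R'/J_p \xrightarrow{\sim} (zA)_p$ glue to a genuine isomorphism of sheaves on the space $Z$; this is where one checks that everything in sight is ``multiplication by $z$'' intrinsically, $z$ being a global section of $\Osh_{\nu L}$ vanishing on $L$. A secondary subtlety is the degenerate case $Z_0 = Z$ (equivalently $Z \subset L$), where $zA = 0$ and $Z'$ is empty, sitting inside $(\nu-1)L$ — this is harmless but should be mentioned. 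Once these points are handled, the proposition follows; I do not expect any homological input beyond the elementary observation that $zA$ is killed by $z^{\nu-1}$.
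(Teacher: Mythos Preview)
The paper does not actually prove this proposition: section~\ref{line} opens by declaring that it ``collect[s] several well-known facts,'' and Proposition~\ref{ZL} is stated without argument. So there is no paper proof to compare against, and your proposal stands on its own.

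Your argument is correct. The key points --- that locally the ideal sheaf of $Z_0$ in $Z$ is $zA$, that $z^{\nu-1}$ annihilates $zA$, and that $zA$ is cyclic over $R' = R/(z^{\nu-1})$ because it is already cyclic over $A$ (being $A\cdot\bar z$) --- are all sound, and the verification that the image of $I$ lands in $J = \operatorname{Ann}_{R'}(\bar z)$ (so that the isomorphism is $\Osh_Z$-linear, not merely $\Osh_{(\nu-1)L}$-linear) is implicit in your setup and easy to supply: for $i\in I$ one has $i\cdot\bar z = \overline{iz} = 0$ in $A$.

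One stylistic remark: the localization and patching can be bypassed entirely. Multiplication by the global function $z$ is an $\Osh_Z$-linear endomorphism of $\Osh_Z$ with image exactly $\I_{Z_0,Z}$ and kernel an ideal sheaf $\Ker(z)\subset\Osh_Z$; hence $\I_{Z_0,Z}\simeq\Osh_Z/\Ker(z)=\Osh_{Z'}$ for the closed subscheme $Z'\subset Z$ cut out by $\Ker(z)$. Since $z^{\nu-1}\in\Ker(z)$, one has $Z'\subset Z\cap(\nu-1)L\subset(\nu-1)L$. This is the same idea as yours --- you correctly identify ``multiplication by $z$'' as what makes the gluing work --- but phrasing it globally from the start removes the need to discuss coordinates or compatibility of local isomorphisms at all.
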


\noindent
Let $Z \subset \nu L$ be a finite subscheme.
By induction on $\nu$, we define a string of non-negative integers $\rho(Z, \nu L) = (r_0^{}, \dots, r_{\nu - 1}^{})$, as follows.
If $\nu = 1$, we put $r_0^{} = \length(Z)$.
If $\nu \ge 2$, we consider the schemes $Z_0^{}$ and $Z'$ from Proposition~\ref{ZL}.
We write $\rho(Z', (\nu - 1)L) = (r_0', \dots, r_{\nu - 2}')$
and we let $r_0^{} = \length(Z_0^{})$ and $r_i^{} = r_{i - 1}'$ for $1 \le i \le \nu - 1$.

\begin{remark}
\label{rho}
Clearly, $r_0^{} + \dots + r_{\nu - 1}^{} = \length(Z)$ and $r_0^{} \ge \dots \ge r_{\nu - 1}^{}$.
\end{remark}

\begin{proposition}
\label{HZL}
Let $\nu \ge 1$ and $m$ be integers.
Let $Z \subset \nu L$ be a zero-dimensional scheme.
Let $\rho(Z, \nu L) = (r_0^{}, \dots, r_{\nu - 1}^{})$ be as above.
We claim that
\[
\dim_\CC^{} \HH^1(\I_{Z, \nu L}^{}(m)) = \sum_{\substack{0 \le i \le \nu - 1 \\ m + 1 < r_i}} (r_i^{} - m - 1).
\]
\end{proposition}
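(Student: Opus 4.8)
The plan is to proceed by induction on $\nu$, using the decomposition from Proposition~\ref{ZL} to peel off one line at a time.

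The base case $\nu = 1$ is a standard fact about finite subschemes of $\PP^1$: if $Z \subset L \cong \PP^1$ has length $r_0$, then $\I_{Z,L}(m) \cong \Osh_{\PP^1}(m - r_0)$, so $\HH^1(\I_{Z,L}(m))$ has dimension $\max(0, r_0 - m - 1)$, which is exactly the claimed sum (a single term, present only when $m + 1 < r_0$). I would record this explicitly, perhaps already in Section~\ref{line}, since it is the engine of the induction.

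For the inductive step with $\nu \ge 2$, consider $Z_0 = Z \cap L$ and the scheme $Z' \subset (\nu-1)L$ from Proposition~\ref{ZL}, together with the short exact sequence of $\Osh_{\nu L}$-modules
\[
0 \lra \I_{Z_0, \nu L}/\I_{Z, \nu L} \lra \Osh_{\nu L}/\I_{Z, \nu L} \lra \Osh_{\nu L}/\I_{Z_0, \nu L} \lra 0.
\]
By Proposition~\ref{ZL} the left-hand term is $\Osh_{Z'}$ as an $\Osh_Z$-module; I need to check that this identification is compatible with the twist by $\Osh_{\nu L}(m) = \pr_2^*\Osh_{\PP^1}(m)$, i.e.\ that $\I_{Z_0,\nu L}/\I_{Z,\nu L} \cong \Osh_{Z'}$ respects the $\pr_2$-pullback grading — this is where I expect to spend the most care, and it is the main obstacle. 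The point is that $\Osh_{\nu L}(m)$ is pulled back from the second factor, and both $Z'$ and the quotient live over the same $\PP^1$ via $\pr_2$, so tensoring with $\Osh(m)$ on $\nu L$ matches tensoring with $\Osh_{\PP^1}(m)$ on the $(\nu-1)L$ side; once this is pinned down, $\HH^1$ of the twisted left term equals $\dim \HH^1(\I_{Z',(\nu-1)L}(m))$. Meanwhile $\Osh_{Z_0}$ sits on $L \cong \PP^1$, so $\HH^1(\Osh_{Z_0}(m))$ vanishes as $\Osh_{Z_0}$ is a skyscraper sheaf; more usefully, I twist the sequence by $\Osh_{\nu L}(m)$ and take the long exact cohomology sequence.

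Then I would compare with the analogous sequence $0 \to \I_{Z_0,\nu L}(m) \to \I_{Z,\nu L}... $ — more precisely, dualize the perspective: from $0 \to \I_{Z,\nu L} \to \Osh_{\nu L} \to \Osh_Z \to 0$ and $0 \to \I_{Z_0,\nu L} \to \Osh_{\nu L} \to \Osh_{Z_0} \to 0$ one gets $0 \to \I_{Z,\nu L} \to \I_{Z_0,\nu L} \to \Osh_{Z'} \to 0$ after twisting. Since $\Osh_{Z'}$ (a zero-dimensional sheaf) has no $\HH^1$, the long exact sequence gives a surjection $\HH^1(\I_{Z,\nu L}(m)) \twoheadrightarrow \HH^1(\I_{Z_0,\nu L}(m))$ with kernel controlled by $\HH^0$ of the three terms. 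To get the exact dimension count I instead use the decomposition $\Osh_{\nu L}(m)$ has a filtration with graded pieces $\Osh_L(m), \Osh_L(m), \dots$ ($\nu$ copies, via the powers of $z$), and likewise compute $\chi$ and $\HH^1$ additively; the cleanest route is: $\dim\HH^1(\I_{Z,\nu L}(m)) = \dim\HH^1(\Osh_{Z_0}$-part$) + \dim\HH^1(Z'$-part$)$ because $\HH^1(\Osh_{\nu L}(m)) = 0$ for all $m$ (as $\PP^1$ has no higher cohomology of line bundles in the relevant range — here $\Osh_{\nu L}(m)$ has $\HH^1 = 0$ always since $z^\nu$ kills nothing in the $\PP^1$-direction and $\HH^1(\Osh_{\PP^1}(m)) $ is handled by Serre duality only when $m \le -2$, so in general I should be careful and instead argue via $\HH^1(\I_{Z,\nu L}(m))$ directly without reference to $\HH^1(\Osh_{\nu L}(m))$). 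Thus by induction $\dim\HH^1(\I_{Z',(\nu-1)L}(m)) = \sum_{1 \le i \le \nu-1,\ m+1 < r_i}(r_i - m - 1)$ after re-indexing $r_i = r_{i-1}'$, and adding the $i = 0$ term $\max(0, r_0 - m - 1)$ from $Z_0 \subset L$ yields the full sum. The only genuinely delicate point throughout is the grading-compatibility of Proposition~\ref{ZL}'s isomorphism with the pullback twist, and the bookkeeping that $\HH^1$ of the ambient twisted ideal sheaf decomposes additively along the filtration by powers of $z$; everything else is the standard $\PP^1$ computation applied $\nu$ times.
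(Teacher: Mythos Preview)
The paper does not actually prove Proposition~\ref{HZL}: it is listed in Section~\ref{line} among ``several well-known facts'' and stated without argument, so there is no proof in the paper to compare against.

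Your inductive strategy is the natural one and the base case is correct, but the inductive step has two genuine gaps. First, the short exact sequence you write down,
\[
0 \lra \I_{Z,\nu L}^{} \lra \I_{Z_0,\nu L}^{} \lra \Osh_{Z'}^{} \lra 0,
\]
does not set up the induction: the middle term is still an ideal sheaf on $\nu L$, not on $(\nu-1)L$, so you never reduce the thickness. The sequence you actually need comes from intersecting $\I_{Z,\nu L}^{}$ with $z\,\Osh_{\nu L}^{} \cong \Osh_{(\nu-1)L}^{}$. The image of $\I_{Z,\nu L}^{}$ in $\Osh_L^{}$ is exactly $\I_{Z_0,L}^{}$, and the kernel, under the identification $z\,\Osh_{\nu L}^{} \cong \Osh_{(\nu-1)L}^{}$, is $\{\, g \in \Osh_{(\nu-1)L}^{} : zg \in \I_{Z,\nu L}^{} \,\} = \I_{Z',(\nu-1)L}^{}$ by the very construction of $Z'$ in Proposition~\ref{ZL}. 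Since multiplication by $z$ is $\pr_2^*\Osh_{\PP^1}^{}$-linear, this is compatible with the twist, and one obtains
\[
0 \lra \I_{Z',(\nu-1)L}^{}(m) \lra \I_{Z,\nu L}^{}(m) \lra \I_{Z_0,L}^{}(m) \lra 0.
\]

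Second, and more seriously, you assert that the $\HH^1$'s ``decompose additively along the filtration'' without saying why the connecting homomorphism
\[
\HH^0(\I_{Z_0,L}^{}(m)) \lra \HH^1(\I_{Z',(\nu-1)L}^{}(m))
\]
vanishes. This is the crux, and it is not a formality: it rests on Remark~\ref{rho}. If the source is nonzero then $\HH^0(\Osh_{\PP^1}^{}(m-r_0^{})) \neq 0$, so $m \ge r_0^{}$; since $r_i^{} \le r_0^{} \le m$ for every $i \ge 1$, the inductive hypothesis forces $\HH^1(\I_{Z',(\nu-1)L}^{}(m)) = 0$. Thus either the source or the target of the connecting map is zero, and one obtains
\[
\dim_\CC^{} \HH^1(\I_{Z,\nu L}^{}(m)) = \dim_\CC^{} \HH^1(\I_{Z',(\nu-1)L}^{}(m)) + \dim_\CC^{} \HH^1(\I_{Z_0,L}^{}(m)),
\]
which by induction and the base case gives the claimed formula. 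Without the monotonicity from Remark~\ref{rho} the additivity would simply fail, so this step should be made explicit.
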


\noindent
Let $Z \subset \PP^1 \times \PP^1$ be a zero-dimensional scheme.
Let $\mu(Z)$ be the smallest integer $\mu$ such that there exists a curve $M \subset \PP^1 \times \PP^1$
of bidegree $(\mu, 0)$ containing $Z$.
Let $\nu(Z)$ be the smallest integer $\nu$ such that there exists a curve $N \subset \PP^1 \times \PP^1$
of bidegree $(0, \nu)$ containing $Z$.
Note that $M$ and $N$ are unique, so they may be denoted by $M(Z)$, respectively, $N(Z)$.
Let $D \subset \PP^1 \times \PP^1$ be a line of bidegree $(1, 0)$.
If $M = \mu D$, then $\rho(Z, \mu D)$ depends only on $Z$, so it may be denoted by $\sigma(Z)$.
Let $E \subset \PP^1 \times \PP^1$ be a line of bidegree $(0, 1)$.
If $N = \nu E$, then $\rho(Z, \nu E)$ depends only on $Z$, so it may be denoted by $\tau(Z)$.
Note that $\sigma(Z)$ and $\tau(Z)$ are strings of positive integers.
Let $\kappa(Z)$ be the number of distinct lines in the reduced support of $M(Z)$.
Write
\[
M(Z) = \bigcup_{1 \le i \le \kappa(Z)} \mu_i^{} D_i^{}.
\]
Let $X_i^{}$ be the subscheme of $Z$ that is concentrated on $D_i^{}$.
Let $\lambda(Z)$ be the number of distinct lines in the reduced support of $N(Z)$.
Write
\[
N(Z) = \bigcup_{1 \le j \le \lambda(Z)} \nu_j^{} E_j^{}.
\]
Let $Y_j^{}$ be the subscheme of $Z$ that is concentrated on $E_j^{}$.

\begin{corollary}
\label{HZM}
Let $Z \subset \PP^1 \times \PP^1$ be a zero-dimensional scheme.
Let $m \ge \mu(Z) - 1$ and $n \ge -1$ be integers.
We adopt the above notations.
\begin{enumerate}
\item[\emph{(i)}]
We claim that ${\displaystyle \dim_\CC^{} \HH^1(\I_Z^{}(m, n))
= \sum_{1 \le i \le \kappa(Z)} \sum_{\substack{0 \le j \le \mu_i - 1 \\ \sigma(X_i)_j > n + 1}} (\sigma(X_i^{})_j^{} - n - 1)}$.
\item[\emph{(ii)}]
Assume, in addition, that $\length(Z \cap D) \le n + 1$ for every line $D \subset \PP^1 \times \PP^1$ of bidegree $(1, 0)$.
We claim that $\HH^1(\I_Z^{}(m, n)) = \{ 0 \}$.
\end{enumerate}
\end{corollary}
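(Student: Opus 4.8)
The plan is to strip away the curve $M = M(Z)$ with a single exact sequence and then apply Proposition~\ref{HZL} on each of its components. From the inclusions of ideal sheaves $\I_M^{} \subset \I_Z^{} \subset \Osh$ one obtains the short exact sequence
\[
0 \lra \I_M^{}(m, n) \lra \I_Z^{}(m, n) \lra \I_{Z, M}^{}(m, n) \lra 0
\]
of sheaves on $\PP^1 \times \PP^1$, where $\I_{Z, M}^{}$ denotes the ideal sheaf of $Z$ inside $M$. Since $M$ has bidegree $(\mu(Z), 0)$, we have $\I_M^{}(m, n) \cong \Osh(m - \mu(Z), n)$, and the hypotheses $m - \mu(Z) \ge -1$ and $n \ge -1$ make both $\HH^1$ and $\HH^2$ of this line bundle vanish by the K\"unneth formula; the long exact cohomology sequence then yields $\HH^1(\I_Z^{}(m, n)) \cong \HH^1(M, \I_{Z, M}^{}(m, n))$.

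Next I would decompose $M = \mu_1^{} D_1^{} \cup \dots \cup \mu_{\kappa(Z)}^{} D_{\kappa(Z)}^{}$. The reduced lines $D_i^{}$ being pairwise distinct, the Chinese Remainder Theorem shows that $M$ is the \emph{disjoint} union of the fat lines $\mu_i^{} D_i^{}$; hence $Z = \coprod_i X_i^{}$, the sheaf $\I_{Z, M}^{}$ splits as $\bigoplus_i \I_{X_i, \mu_i D_i}^{}$, and the bundle $\Osh(m, n)$ restricts on $\mu_i^{} D_i^{}$ to the sheaf $\Osh_{\mu_i D_i}^{}(n)$ of Section~\ref{line} (on the fat line $\mu_i^{} D_i^{}$, whose first projection is a point, the twist in the first variable is trivial). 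This gives $\HH^1(\I_Z^{}(m, n)) \cong \bigoplus_{1 \le i \le \kappa(Z)} \HH^1(\I_{X_i, \mu_i D_i}^{}(n))$. By minimality of $M(Z)$ we have $\mu(X_i^{}) = \mu_i^{}$, so $\sigma(X_i^{}) = \rho(X_i^{}, \mu_i^{} D_i^{})$ is a string of $\mu_i^{}$ positive integers, and Proposition~\ref{HZL} applied to $X_i^{} \subset \mu_i^{} D_i^{}$ with twist $n$ evaluates each summand; summing over $i$ gives part (i).

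For part (ii), by (i) it is enough to show that every inner sum is empty, that is, $\sigma(X_i^{})_j^{} \le n + 1$ for all $i$ and all $0 \le j \le \mu_i^{} - 1$. By Remark~\ref{rho} the string $\sigma(X_i^{})$ is non-increasing, so it suffices to bound its leading entry; and by the definition of $\rho$ one has $\sigma(X_i^{})_0^{} = \length(X_i^{} \cap D_i^{}) = \length(Z \cap D_i^{}) \le n + 1$ by hypothesis (for a line $D$ not among the $D_i^{}$ the scheme $Z \cap D$ is empty). Hence $\HH^1(\I_Z^{}(m, n)) = \{ 0 \}$.

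I expect no serious obstacle: the statement is essentially Proposition~\ref{HZL} transported through two exact-sequence manipulations. The only place that needs genuine care is the second paragraph --- confirming that the fat lines $\mu_i^{} D_i^{}$ are pairwise disjoint, that the direct-sum decompositions of $Z$, of $\I_{Z, M}^{}$ and of cohomology are compatible, and that $\Osh(m, n)|_{\mu_i D_i}$ is exactly the sheaf $\Osh_{\mu_i D_i}^{}(n)$ of Section~\ref{line}, so that Proposition~\ref{HZL} applies verbatim.
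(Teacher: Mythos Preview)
Your proof is correct and follows essentially the same route as the paper: the same short exact sequence stripping off $M(Z)$, the same vanishing of $\HH^q(\Osh(m-\mu(Z),n))$ for $q=1,2$, and then Proposition~\ref{HZL} on each fat line, with Remark~\ref{rho} handling part~(ii). You are simply more explicit than the paper about the disjointness of the $\mu_i D_i$, the triviality of the first twist upon restriction, and the identification $\sigma(X_i)=\rho(X_i,\mu_i D_i)$ via $\mu(X_i)=\mu_i$; the paper compresses all of this into ``the claim follows from Proposition~\ref{HZL}.''
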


\begin{proof}
(i) From the exact sequence
\[
0 \lra \Osh(m - \mu(Z), n) \lra \I_Z^{}(m, n) \lra \I_{Z, M(Z)}^{}(n) \lra 0,
\]
and from the vanishing of $\HH^q(\Osh(m - \mu(Z), n))$ for $q = 1$, $2$,
we get the isomorphism
\[
\HH^1(\I_Z^{}(m, n)) \simeq \HH^1(\I_{Z, M(Z)}(n)).
\]
The claim follows from Proposition~\ref{HZL}.

\medskip

\noindent
(ii) By hypothesis, for all indices $1 \le i \le \kappa(Z)$, we have the inequality
$\sigma(X_i^{})_0^{} = \length(Z \cap D_i^{}) \le n + 1$.
In view of Remark~\ref{rho}, for $1 \le j \le \mu_i^{} - 1$, we have the inequalities
$\sigma(X_i^{})_j^{} \le \sigma(X_i^{})_0^{} \le n + 1$.
The r.h.s.\ in formula (i) vanishes.
\end{proof}

\begin{corollary}
\label{HZN}
Let $Z \subset \PP^1 \times \PP^1$ be a zero-dimensional scheme.
Let $n \ge \nu(Z) - 1$ and $m \ge -1$ be integers.
We adopt the above notations.
\begin{enumerate}
\item[\emph{(i)}] We claim that ${\displaystyle \dim_\CC^{} \HH^1(\I_Z^{}(m, n))
= \sum_{1 \le j \le \lambda(Z)} \sum_{\substack{0 \le i \le \nu_j - 1 \\ \tau(Y_j)_i > m + 1}} (\tau(Y_j^{})_i^{} - m - 1)}$.
\item[\emph{(ii)}]
Assume, in addition, that $\length(Z \cap E) \le m + 1$ for every line $E \subset \PP^1 \times \PP^1$ of bidegree $(0, 1)$.
We claim that $\HH^1(\I_Z^{}(m, n)) = \{ 0 \}$.
\end{enumerate}
\end{corollary}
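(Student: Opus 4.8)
The plan is to mirror the proof of Corollary~\ref{HZM}, interchanging the two rulings of $\PP^1 \times \PP^1$; nothing essentially new is needed.

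For part (i), I would start from the exact sequence
\[
0 \lra \Osh(m, n - \nu(Z)) \lra \I_Z^{}(m, n) \lra \I_{Z, N(Z)}^{}(m) \lra 0
\]
obtained by multiplying with the defining section of $N(Z)$, which has bidegree $(0, \nu(Z))$. Since $m \ge -1$ and $n - \nu(Z) \ge -1$, the K\"unneth formula gives $\HH^q(\Osh(m, n - \nu(Z))) = \{0\}$ for $q = 1, 2$, whence $\HH^1(\I_Z^{}(m, n)) \simeq \HH^1(\I_{Z, N(Z)}^{}(m))$. Now $N(Z) = \bigcup_{1 \le j \le \lambda(Z)} \nu_j^{} E_j^{}$ has components supported on distinct lines, so $Z$ is the disjoint union of the $Y_j^{}$ and $\I_{Z, N(Z)}^{}(m) \simeq \bigoplus_j \I_{Y_j^{}, \nu_j^{} E_j^{}}^{}(m)$, with cohomology respecting this decomposition. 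Each $\nu_j^{} E_j^{}$ is, after interchanging the two factors of $\PP^1 \times \PP^1$, a copy of the multiple line $\nu_j^{} L$ of section~\ref{line}, with $\Osh(m, n)$ restricting to $\Osh_{\nu_j L}^{}(m)$ and with $\rho(Y_j^{}, \nu_j^{} E_j^{}) = \tau(Y_j^{})$ by the definition of $\tau$. Applying Proposition~\ref{HZL} to each $Y_j^{} \subset \nu_j^{} E_j^{}$ (with the twist $m$ and with $\nu = \nu_j$) and summing over $j$ yields the asserted formula.

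For part (ii), the additional hypothesis applied to the line $E = E_j^{}$ gives $\tau(Y_j^{})_0^{} = \length(Y_j^{} \cap E_j^{}) = \length(Z \cap E_j^{}) \le m + 1$ for every $j$. By the monotonicity in Remark~\ref{rho}, $\tau(Y_j^{})_i^{} \le \tau(Y_j^{})_0^{} \le m + 1$ for all $0 \le i \le \nu_j^{} - 1$, so no index contributes to the double sum in (i) and $\HH^1(\I_Z^{}(m, n)) = \{0\}$.

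There is no genuine obstacle here, as the statement is the exact transpose of Corollary~\ref{HZM}; the only points requiring a moment's care are (a) checking that the possible reducibility of $N(Z)$ causes no trouble, which it does not because $Z$ splits as the disjoint union of the $Y_j^{}$, and (b) keeping track of which ruling plays the role of the $\PP^1$ factor in section~\ref{line}, so that $\Osh(m, n)$ restricts to a twist by $m$ (not $n$) on $N(Z)$ and the relevant string of integers is $\tau(Z)$ (not $\sigma(Z)$).
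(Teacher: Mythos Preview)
Your proposal is correct and is exactly the approach the paper intends: Corollary~\ref{HZN} is stated without proof as the transpose of Corollary~\ref{HZM}, and your argument faithfully mirrors the paper's proof of the latter, swapping the roles of the two rulings. If anything, you are slightly more explicit than the paper in spelling out the decomposition $\I_{Z, N(Z)}^{}(m) \simeq \bigoplus_j \I_{Y_j^{}, \nu_j^{} E_j^{}}^{}(m)$ before invoking Proposition~\ref{HZL}, which the paper leaves implicit.
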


\section{An affine decomposition of the punctual Hilbert scheme}
\label{punctual}

\noindent
Let $p \in \PP^1 \times \PP^1$ be the point given by the equations $x = 0$, $z = 0$.
Let $l$ be a positive integer.
Let $\Hilb_p^{}(l)$ be the punctual Hilbert scheme parametrizing subschemes of length $l$ of $\PP^1 \times \PP^1$
that are concentrated at $p$.
In this section we will give a decomposition of $\Hilb_p^{}(l)$ into locally closed subsets
that are isomorphic to affine spaces.
This decomposition is due to Ellingsrud and Str{\o}mme, see \cite{ellingsrud}.
The following proposition is well-known.

\begin{proposition}
\label{IP}
Let $P \subset \PP^1 \times \PP^1$ be a subscheme that is concentrated at $p$.
Write $\nu = \nu(P)$ and $\tau(P) = (t_0^{}, \dots, t_{\nu - 1}^{})$.
Let $I_P^{} \subset \Osh_p^{} = \CC[x, z]_{(x, z))}^{}$ be the ideal of $P$.
We claim that, for $0 \le k \le \nu - 2$, there are polynomials of the form
\[
f_k^{} = x^{t_k} z^k + \sum_{j = k + 1}^{\nu - 1} \sum_{i = 0}^{t_j - 1} a_{ijk}^{} \, x^i z^j
\]
such that $I_P^{} = (f_0^{}, \dots, f_{\nu - 2}^{}, \, x^{t_{\nu - 1}} z^{\nu - 1}, \, z^\nu)$.
We claim that the set of polynomials
\[
\{ x^i z^j \mid 0 \le j \le \nu - 1, \ 0 \le i \le t_j^{} - 1 \}
\]
gives a basis of $\Osh_{P, p}^{} = \Osh_p^{}/ I_P^{}$ as a $\CC$-vector space.
\end{proposition}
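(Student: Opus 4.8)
The plan is to proceed by induction on $\nu = \nu(P)$, the minimal degree of a curve of bidegree $(0,\nu)$ through $P$, exploiting the fact that $z^\nu \in I_P$ but no polynomial of the form $\sum_{j=0}^{\nu-1} c_j(x) z^j$ with $c_{\nu-1}$ a unit lies in $I_P$ (otherwise $P$ would be cut out by a curve of bidegree $(0,\nu-1)$, contradicting minimality of $\nu$). The base case $\nu = 1$ is immediate: $I_P$ is then an $(x)$-primary ideal in $\CC[x]_{(x)}[z]/(z)$, hence generated by $x^{t_0}$ together with $z$, and $\{x^i : 0 \le i \le t_0 - 1\}$ is the obvious basis.

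For the inductive step I would restrict to the sublinear $L = \{z = 0\}$ and the residual scheme inside $(\nu-1)L$, mimicking Proposition~\ref{ZL}. First I would show that $I_P$ contains a polynomial $f_0 = x^{t_0} + (\text{lower order in }x, \text{ terms with } z^1, \dots, z^{\nu-1})$: since $z^\nu \in I_P$, the image $\bar I_P$ of $I_P$ in $\Osh_p/(z^\nu) = \CC[x,z]/(z^\nu)_{(x,z)}$ still has colength $l$, and the composite with the surjection to $\CC[x]_{(x)} = \Osh_p/(z)$ has image an $(x)$-primary ideal, hence $(x^{t_0})$ by definition of $t_0 = \length(Z \cap L)$; lifting $x^{t_0}$ gives $f_0$. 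Next, the key structural point: the ideal $I_P + (z)/(z) \cdot \text{(appropriate twist)}$ — more precisely, the $\Osh_P$-submodule generated by $z$ inside $\Osh_P$, i.e.\ $(z \Osh_p + I_P)/I_P$, is by Proposition~\ref{ZL} (applied with $E$ in the role of $L$) isomorphic to $\Osh_{P'}$ for a finite scheme $P' \subset (\nu-1)E$ concentrated at $p$, with $\tau(P', (\nu-1)E) = (t_1, \dots, t_{\nu-1})$. Applying the induction hypothesis to $P'$ yields generators $g_0, \dots, g_{\nu-3}, x^{t_{\nu-1}} z^{\nu-2}, z^{\nu-1}$ of $I_{P'}$ (in the variable where $E = \{z' = 0\}$ pulls back to the shift $z \mapsto z$), and I would lift each $g_k$ through the multiplication-by-$z$ map to a polynomial $f_{k+1}$ of the prescribed shape, noting that multiplying a generator of $I_{P'}$ by $z$ and adjusting by a multiple of $f_0$ lands it in $I_P$ with the leading term $x^{t_{k+1}} z^{k+1}$ and all other monomials $x^i z^j$ satisfying $j \ge k+2$, $i \le t_j - 1$. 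The normalization reducing the $x$-exponents in the tail below $t_j$ is achieved by repeatedly subtracting $\Osh_p$-multiples of the already-constructed $f_j$ for $j > k$, using that $f_j$ has $x$-leading term $x^{t_j} z^j$; this is a finite descending process.

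Having produced $f_0, \dots, f_{\nu-2}, x^{t_{\nu-1}}z^{\nu-1}, z^\nu$ inside $I_P$, I would check they generate: the quotient of $\Osh_p$ by the ideal they generate is spanned over $\CC$ by $\{x^i z^j : 0 \le j \le \nu - 1,\ 0 \le i \le t_j - 1\}$, because any monomial can be reduced modulo the relations — $z^j$ for $j \ge \nu$ falls via $z^\nu$, and $x^i z^j$ with $i \ge t_j$ falls via $f_j$ (or via $x^{t_{\nu-1}} z^{\nu-1}$ when $j = \nu - 1$), using the triangular shape of the $f_j$ with respect to the term order that ranks $x^i z^j$ first by $j$ then by $i$. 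This exhibits $\length(\Osh_p/(\text{that ideal})) \le t_0 + \dots + t_{\nu-1} = l = \length(P)$, and since the ideal is contained in $I_P$ the lengths must be equal, forcing the ideal to equal $I_P$ and the spanning set to be a basis.

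The main obstacle I anticipate is the bookkeeping in the normalization step — ensuring that after lifting $z \cdot g_k$ to $I_P$ and subtracting the correction term $(\text{coefficient}) \cdot f_0$ to kill any $z^0$-contribution, the further reduction of high $x$-powers in the $z^j$-tail ($j > k+1$) by the $f_j$'s terminates and does not reintroduce lower-degree-in-$z$ terms. This is purely a Gröbner-style termination argument for the monomial order described above, but it requires care because the $f_j$ themselves have tails; one must argue by downward induction on $j$ starting from $j = \nu - 1$. Everything else is a routine dimension count once this triangularity is in place.
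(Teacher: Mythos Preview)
The paper states this proposition as well-known and gives no proof, so there is nothing in the text to compare against. Your inductive approach on $\nu$, feeding through Proposition~\ref{ZL} and the identification $I_{P'} = (I_P : z)$, is the natural one and is correct. Two clarifications.

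First, the adjustment by a multiple of $f_0$ when lifting $z g_k$ is superfluous: since $g_k \in I_{P'} = (I_P : z)$, the product $z g_k$ already lies in $I_P$, carries no $z^0$-term, and its tail monomials $x^i z^{j}$ (for $j \ge k+2$) automatically satisfy $i \le t_j - 1$ by the inductive shape of $g_k$. Hence $f_{k+1} := z g_k$ is already in the prescribed form, and only the lift $\tilde f_0$ of $x^{t_0}$ needs normalization.

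Second, your worry about the normalization step evaporates once you process $j$ in \emph{increasing} rather than decreasing order. Reducing the $z^j$-coefficient of $\tilde f_0$ modulo $x^{t_j}$ by subtracting an $\Osh_p$-multiple of $f_j$ alters only the $z^{j'}$-coefficients with $j' > j$, because the tail of $f_j$ lives entirely in $z$-degree strictly above $j$; the previously treated coefficients therefore stay fixed, and the process halts after $\nu - 1$ steps (followed by reduction modulo $z^\nu$). Equivalently, in the lexicographic order with $x$ dominant the leading monomial of $f_j$ is $x^{t_j} z^j$ (using $t_j \ge t_{j'}$ for $j' > j$ from Remark~\ref{rho}), so the system is already triangular in the right direction. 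The final length comparison then forces the generated ideal to equal $I_P$ and the standard monomials to be a basis, exactly as you outline.
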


\begin{corollary}
\label{OP}
We adopt the above notations.
We claim that the set of polynomials
\[
\{ x^i z^j \mid 0 \le i \le t_0^{} - 1, \ 0 \le j \le \nu - 1 \}
\]
generates $\Osh_{P, p}^{}$ as a $\CC$-vector space.
\end{corollary}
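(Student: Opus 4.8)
The plan is to deduce this directly from Proposition~\ref{IP}, together with the monotonicity property of the string $\tau$ recorded in Remark~\ref{rho}. Proposition~\ref{IP} asserts that the monomials $x^i z^j$ with $0 \le j \le \nu - 1$ and $0 \le i \le t_j - 1$ form a $\CC$-basis of $\Osh_{P, p}^{}$. Since a vector space is generated by any collection of elements that contains a basis, it suffices to check that every one of these basis monomials already occurs among the monomials $x^i z^j$ with $0 \le i \le t_0^{} - 1$ and $0 \le j \le \nu - 1$.

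So first I would apply Remark~\ref{rho} to $\tau(P) = \rho(P, \nu E) = (t_0^{}, \dots, t_{\nu - 1}^{})$, which yields the chain $t_0^{} \ge t_1^{} \ge \dots \ge t_{\nu - 1}^{}$. In particular $t_j^{} \le t_0^{}$ for every $j$ with $0 \le j \le \nu - 1$, so for each such $j$ the index range $0 \le i \le t_j^{} - 1$ is contained in the range $0 \le i \le t_0^{} - 1$. Consequently
\[
\{ x^i z^j \mid 0 \le j \le \nu - 1, \ 0 \le i \le t_j^{} - 1 \} \subseteq \{ x^i z^j \mid 0 \le i \le t_0^{} - 1, \ 0 \le j \le \nu - 1 \},
\]
and since the left-hand set is a $\CC$-basis of $\Osh_{P, p}^{}$ by Proposition~\ref{IP}, the right-hand set spans $\Osh_{P, p}^{}$.

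There is no genuine obstacle here: the entire content of the statement is carried by Proposition~\ref{IP}, and the corollary is merely a convenient reformulation in which the exponent of $x$ is bounded uniformly by $t_0^{} - 1$ (at the cost of the generating family no longer being a basis). I expect this uniform bound is what will be used later to parametrize the affine cells of $\Hilb_p^{}(l)$, where a presentation of $\Osh_{P, p}^{}$ with a rectangular set of generators is more natural to work with than the staircase-shaped basis of Proposition~\ref{IP}.
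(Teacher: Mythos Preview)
Your proof is correct and matches the paper's own argument essentially verbatim: the paper likewise observes that Remark~\ref{rho} gives $t_j \le t_0$, so the set in the corollary contains the basis from Proposition~\ref{IP}. (Your closing speculation about the intended use is slightly off---the corollary is actually invoked in the proof of Lemma~\ref{ZCE}, not in the construction of the affine cells---but this does not affect the proof.)
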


\noindent
Indeed, according to Remark~\ref{rho}, $t_j^{} \le t_0^{}$,
so the set of polynomials from the above corollary contains the basis from the above proposition.
Given a positive integer $l$, we consider the set of partitions of $l$,
\[
\Pi(l) = \{ \tau = (t_0^{}, \dots, t_{\nu - 1}^{}) \mid t_0^{} \ge \dots \ge t_{\nu - 1}^{} > 0, \ t_0^{} + \dots + t_{\nu - 1}^{} = l \}.
\]
Given $\tau \in \Pi(l)$, we consider the subset $A(\tau) = \{ P \mid \tau(P) = \tau \} \subset \Hilb_p^{}(l)$.

\begin{proposition}
\label{affine}
The family $\{ A(\tau) \}_{\tau \in \Pi(l)}^{}$ constitutes a decomposition of $\Hilb_p^{}(l)$ into locally closed subsets.
We equip $A(\tau)$ with the induced reduced structure.
We claim that $A(\tau) \simeq \Affine^{l - t_0}$.
\end{proposition}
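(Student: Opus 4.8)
The plan is to parametrize the ideals in $A(\tau)$ using the normal form from Proposition~\ref{IP} and show that the coefficients $a_{ijk}$ are free affine coordinates, subject only to the open condition $\tau(P) = \tau$. First I would observe that if $\tau(P) = \tau = (t_0, \dots, t_{\nu-1})$, then $N(P) = \nu E$ where $E$ is the line $z = 0$ (since the point $p$ lies on a unique $(0,1)$-curve, namely $z = 0$), and so Proposition~\ref{IP} applies verbatim: the ideal $I_P$ has a unique generating set of the form $(f_0, \dots, f_{\nu-2}, x^{t_{\nu-1}} z^{\nu-1}, z^\nu)$ with the $f_k$ as displayed. Conversely, any choice of the coefficients $a_{ijk}$ (for $0 \le k \le \nu - 2$, $k + 1 \le j \le \nu - 1$, $0 \le i \le t_j - 1$) defines an ideal $I$ with $\Osh_p / I$ having the monomials $\{x^i z^j : 0 \le j \le \nu - 1, 0 \le i \le t_j - 1\}$ as a spanning set, hence of length $\le l$; one checks the length is exactly $l$ and $\tau(P) = \tau$ on a dense open subset. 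Counting the free parameters gives $\sum_{k=0}^{\nu-2} \sum_{j=k+1}^{\nu-1} t_j = \sum_{j=1}^{\nu-1} j \, t_j$ — wait, that is not obviously $l - t_0$, so the naive normal form overcounts and the real work is to cut it down.

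The resolution, which is the heart of the Ellingsrud--Str{\o}mme argument, is that the $f_k$ are \emph{not} independent: the requirement that they actually generate an ideal of colength $l$ (equivalently, that the Buchberger-type syzygies close up, or that multiplication by $x$ and $z$ on $\Osh_p/I$ is well-defined with the prescribed monomial basis) imposes relations that kill exactly $\sum_{j=1}^{\nu-1} j\,t_j - (l - t_0)$ of the parameters, and does so in a way that expresses the redundant $a_{ijk}$ as polynomials in the remaining ones. Concretely, I would proceed by induction on $\nu$ using Proposition~\ref{ZL}: the scheme $P_0 = P \cap E$ has length $t_0$ and is the subscheme of the affine line $\Spec \CC[x]$ cut out by $x^{t_0}$, so it is a single point with no moduli, while the residual scheme $P'$ lies in $(\nu - 1)E$ and has $\tau(P') = (t_1, \dots, t_{\nu-1})$, hence by induction moves in an $\Affine^{(t_1 + \dots + t_{\nu-1}) - t_1}$. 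The extension data gluing $P'$ back to $P$ inside $\nu E$ — an element of a suitable $\operatorname{Hom}$ or $\operatorname{Ext}$ group between $\Osh_{P_0}$-modules — should contribute the remaining $t_1 + \dots + t_{\nu-1} = l - t_0$ affine coordinates, after one verifies that this extension space is itself an affine space of that dimension and that the total map to $\Hilb_p(l)$ is an isomorphism onto $A(\tau)$.

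The main obstacle will be making the induction bookkeeping honest: one must show that the "residual scheme plus gluing data" description really matches the locally closed subset $A(\tau)$ with its reduced structure, i.e.\ that the constructed morphism $\Affine^{l - t_0} \to A(\tau)$ is bijective on points \emph{and} an isomorphism of schemes (not merely a bijective morphism, which over $\CC$ need not be an isomorphism). For this I would exhibit an explicit inverse: given $P \in A(\tau)$, read off $P_0$ and $P'$ canonically from $P$ via Proposition~\ref{ZL}, extract the extension class, and check that these operations are morphic. Equivalently, one can trivialize the normal-form coordinates directly and show the redundant $a_{ijk}$ are regular functions of the free ones on all of $A(\tau)$; the dimension count $\dim A(\tau) = l - t_0$ then follows since $A(\tau)$ is smooth (being an open subset of an affine space in the free coordinates). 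A secondary technical point is to confirm that $A(\tau)$ is indeed locally closed and that the $A(\tau)$ exhaust $\Hilb_p(l)$: local closedness follows because $\tau(P) = \tau$ is equivalent to the pair of conditions that $\dim \HH^0(\I_P(0, \nu - 1)) $ jumps appropriately (a closed condition) together with $\HH^0(\I_P(0, \nu - 2)) = 0$ (an open one), refined by the semicontinuity of each partial length $t_j$; exhaustion is immediate since every $P$ has a well-defined $\tau(P) \in \Pi(l)$.
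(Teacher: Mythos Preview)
Your approach is genuinely different from the paper's, and it has a real gap.

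The paper does not parametrize the normal forms directly at all. Instead it introduces the $\CC^*$-action $a.\,x^iz^j=a^jx^iz^j$ on $\CC[x,z]$, hence on $\Hilb_p(l)$, observes that the fixed points are exactly the monomial ideals $P_\tau$, and checks using the normal form of Proposition~\ref{IP} that $\lim_{a\to 0}a.P=P_\tau$ precisely when $\tau(P)=\tau$. Thus $A(\tau)$ is the Bia{\l}ynicki--Birula plus-cell of $P_\tau$, and \cite[Theorem~4.4]{birula} gives for free that the $A(\tau)$ form a locally closed decomposition and that each is an affine space; the dimension $l-t_0$ is read off from the tangent weights as in \cite[p.~350]{ellingsrud}. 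No induction on $\nu$, no analysis of syzygies or extension classes, is needed.

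Your direct route could in principle be made to work, but as written the inductive bookkeeping is wrong. You correctly note that the naive coefficient count $\sum_{k=0}^{\nu-2}\sum_{j=k+1}^{\nu-1}t_j=\sum_{j\ge 1}j\,t_j$ overcounts. In the induction you then say $P'$ varies in $\Affine^{(t_1+\cdots+t_{\nu-1})-t_1}$ and that the extension data contributes ``the remaining $t_1+\cdots+t_{\nu-1}=l-t_0$'' coordinates; but those two numbers sum to $2(l-t_0)-t_1$, not $l-t_0$. The extension piece must contribute exactly $t_1$ parameters, and you give no argument for why the relevant gluing space (which is not an honest $\operatorname{Ext}^1$ of sheaves, since $P'$ is not naturally a subscheme of $P$) is an affine space of that dimension, nor why the resulting map is an isomorphism of schemes onto $A(\tau)$. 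Making this precise amounts to re-deriving the Ellingsrud--Str{\o}mme cell structure by hand, whereas invoking Bia{\l}ynicki--Birula settles the affine-space claim and the locally closed property in one stroke.
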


\noindent
The above proposition is a direct consequence of the proof of \cite[Theorem (1.1)(iv)]{ellingsrud}.
We will sketch the argument here for the sake of the reader.
Consider the action of the multiplicative group $\CC^*$ on $\CC[x, z]$ given by $a. x^i z^j = a^j x^i z^j$.
Identifying $\Hilb_p^{}(l)$ with the set of ideals in $\CC[x, z]$ that have colenght $l$ and that are contained in $(x, z)$,
we obtain an induced action of $\CC^*$ on $\Hilb_p^{}(l)$.
The set of fixed points for this action is $\{ P_\tau^{} \mid \tau \in \Pi(l) \}$,
where $P_\tau^{}$ is given by the ideal $(x^{t_0}, \, x^{t_1} z, \dots, x^{t_{\nu - 1}} z^{\nu - 1}, \, z^\nu)$.
Choose an arbitrary $P \in A(\tau)$
and write $I_P^{} = (f_0^{}, \dots, f_{\nu - 2}^{}, \, x^{t_{\nu - 1}} z^{\nu - 1}, \, z^\nu)$, as in Proposition~\ref{IP}.
Note that ${\displaystyle \lim_{a \to 0} a. P}$ has ideal
\begin{align*}
\lim_{a \to 0} & \Big( \dots, a^k x^{t_k} z^k + \sum_{j = k + 1}^{\nu - 1} \sum_{i = 0}^{t_j - 1} a_{ijk}^{} \, a^j x^i z^j, \dots,
a^{\nu - 1} x^{t_{\nu - 1}} z^{\nu - 1}, \, a^\nu z^\nu \Big) \\
& = \lim_{a \to 0} \Big( \dots, x^{t_k} z^k + \sum_{j = k + 1}^{\nu - 1} \sum_{i = 0}^{t_j - 1} a_{ijk}^{} \, a^{j - k} x^i z^j, \dots,
x^{t_{\nu - 1}} z^{\nu - 1}, \, z^\nu \Big) \\
& = (\dots, x^{t_k} z^k, \dots, x^{t_{\nu - 1}} z^{\nu - 1}, \, z^\nu),
\end{align*}
which is the ideal of $P_\tau^{}$.
Thus, ${\displaystyle \lim_{a \to 0} a. P = P_\tau^{}}$.
It has now become clear that
\[
A(\tau) = \{ P \in \Hilb_p^{}(l) \mid \lim_{a \to 0} a. P = P_\tau^{} \},
\]
so we may apply \cite[Theorem 4.4]{birula}
in order to deduce that $\{ A(\tau) \}_{\tau \in \Pi(l)}^{}$ is a locally closed decomposition of $\Hilb_p^{}(l)$
and that each $A(\tau)$ is isomorphic to an affine space.
The dimension of these affine spaces was computed in \cite[p.\ 350]{ellingsrud}.

\section{The vanishing theorem}
\label{vanishing_theorem}

\noindent
In this section we prove the Vanishing Theorem~\ref{vanishing},
which will lead us to the description of the Brill-Noether loci in Theorem~\ref{BN}.
The key technical step towards the proof of Theorem~\ref{vanishing} is contained in the following lemma.

\begin{lemma}
\label{ZCE}
Consider integers $c \ge 0$, $\nu \ge 1$, $m \ge 1$ and $n \ge c + \nu - 1$.
Let $C \subset \PP^1 \times \PP^1$ be a curve of bidegree $(1, c)$.
Let $E \subset \PP^1 \times \PP^1$ be a line of bidegree $(0, 1)$, which is not contained in $C$.
Consider a zero-dimensional scheme $Z \subset C \cup \nu E$ and consider its restriction $X = Z \cap C$.
Assume that $\length(Z \cap E) \le m$.
Then
\[
\HH^1(\I_Z^{}(m, n)) \simeq \HH^1 (\I_X^{}(m, n)).
\]
\end{lemma}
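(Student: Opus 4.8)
The natural approach is to fit $Z$ and $X$ into a short exact sequence of ideal sheaves that relates the two, and then show the difference term has no higher cohomology in the relevant twist. Since $X = Z \cap C$, the inclusion $X \subset Z$ gives a surjection $\Osh_Z \to \Osh_X$ whose kernel is supported on $\nu E$; dually, on ideal sheaves, there is an exact sequence
\[
0 \lra \I_Z^{}(m, n) \lra \I_X^{}(m, n) \lra \mathcal{Q} \lra 0,
\]
where $\mathcal{Q}$ is a sheaf supported on $\nu E$. Concretely, $\mathcal{Q} = \I_X / \I_Z$ is identified with the ideal sheaf of $W := Z \cap \nu E$ (or rather its residual) inside $\Osh_{\nu E}$, twisted by $(m, n)$; more precisely $\mathcal{Q} \simeq \I_{W, \nu E}^{}(m, n)$ for the appropriate subscheme $W \subset \nu E$ coming from $Z$. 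From the long exact cohomology sequence, $\HH^1(\I_Z(m,n)) \to \HH^1(\I_X(m,n))$ is an isomorphism as soon as $\HH^0(\I_X(m,n)) \to \HH^0(\mathcal{Q})$ is surjective and $\HH^1(\mathcal{Q}) = 0$.

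The crux is therefore two things: first, to identify $\mathcal{Q}$ precisely as the twisted ideal sheaf of a finite subscheme of the multiple line $\nu E$, so that Proposition~\ref{HZL} applies; and second, to check the vanishing $\HH^1(\mathcal{Q}) = 0$ and the surjectivity of the restriction map. For the vanishing: $\nu E$ has bidegree $(0,\nu)$, and restricting $\Osh(m,n)$ to $\nu E \cong \Spec \CC[x]/(x^\nu) \times \PP^1$ gives, after unwinding the roles of the two factors, a sheaf whose cohomology is governed by Proposition~\ref{HZL} with the line direction being the $\PP^1$-factor of degree $n$ and the multiplicity being $\nu$. The string $\rho(W, \nu E)$ has entries bounded above by $\length(W \cap E) \le \length(Z \cap E) \le m$; since $n \ge c + \nu - 1 \ge \nu - 1$, one checks that each entry $r_i$ of the string is at most $m \le n + 1$ for all $0 \le i \le \nu - 1$ — here I would use Remark~\ref{rho} to reduce to bounding $r_0 = \length(W \cap E)$ — so the sum in Proposition~\ref{HZL} is empty and $\HH^1(\mathcal{Q}) = 0$.

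For the surjectivity of $\HH^0(\I_X(m,n)) \to \HH^0(\mathcal{Q})$: this amounts to showing that every section of $\Osh(m,n)$ vanishing on $X$ and restricting to a prescribed section on $\nu E$ (vanishing on $W$) can be realized, i.e.\ the obstruction in $\HH^1(\I_{X \cup \nu E}(m, n) \otimes (\text{something}))$ vanishes. A cleaner route: use the sequence
\[
0 \lra \I_{X \cup \nu E}^{}(m, n) \lra \I_X^{}(m, n) \lra \mathcal{Q} \lra 0
\]
and observe $\mathcal{Q}$ is the cokernel, so surjectivity on $\HH^0$ is equivalent to $\HH^1(\I_{X \cup \nu E}(m,n)) \hookrightarrow \HH^1(\I_X(m,n))$, which in turn follows from $\HH^1(\mathcal{Q}) = 0$ once we know the full sequence — but that is circular, so instead I would directly use that $\nu E$ is defined by the equation $g^\nu = 0$ where $g$ is the bidegree-$(1,0)$ form cutting out $E$; multiplication by $g^\nu$ identifies $\HH^0(\I_{X'}(m - \nu, n))$ with a subspace surjecting appropriately, where $X' $ is the residual of $X$ with respect to $\nu E$. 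The hypothesis $m \ge 1$ (together with $n$ large) ensures the twists $\Osh(m - k, n)$ for $1 \le k \le \nu$ appearing along the way have vanishing $\HH^1$, since $m - \nu$ could be negative but the $\PP^1 \times \PP^1$ line bundle $\Osh(a,b)$ has $\HH^1 = 0$ whenever $b \ge 0$ or $a \ge 0$, and here $n \ge 0$.

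**Main obstacle.** The genuinely delicate point is the bookkeeping in identifying $\mathcal{Q}$ with $\I_{W,\nu E}(m,n)$ for the correct scheme $W$ — in particular verifying that $W$ is a subscheme of $\nu E$ (not just set-theoretically on $E$) and that $\length(W \cap E) \le \length(Z \cap E)$, so that the multiple-line machinery of Section~\ref{line} applies and the bound against $n + 1$ goes through. Everything else is a diagram chase plus the standard $\HH^{>0}$-vanishing for line bundles $\Osh(a,b)$ on $\PP^1 \times \PP^1$ with $b \ge 0$.
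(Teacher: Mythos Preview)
Your overall strategy --- the exact sequence $0 \to \I_Z(m,n) \to \I_X(m,n) \to \I_{X,Z} \to 0$ and the reduction to surjectivity on $\HH^0$ plus vanishing of $\HH^1$ of the quotient --- is exactly what the paper does. But your identification of the quotient is wrong, and this is where the argument collapses.

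The sheaf $\mathcal{Q} = \I_X/\I_Z$ is $\I_{X,Z}$, the ideal of $X$ inside $Z$; it is a subsheaf of $\Osh_Z$, hence a \emph{zero-dimensional} sheaf. So $\HH^1(\mathcal{Q}) = 0$ is automatic and needs no appeal to Proposition~\ref{HZL}. Your claim that $\mathcal{Q} \simeq \I_{W,\nu E}(m,n)$ for some $W \subset \nu E$ cannot hold: the latter is a rank-one sheaf on the curve $\nu E$, hence one-dimensional. (Incidentally, $E$ has bidegree $(0,1)$, so its defining form has bidegree $(0,1)$, not $(1,0)$; and the $\PP^1$-direction along $\nu E$ is the first factor, so the relevant twist for Proposition~\ref{HZL} would be $m$, not $n$.)

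Consequently the entire content of the lemma sits in the surjectivity of $\HH^0(\I_X(m,n)) \to \HH^0(\I_{X,Z})$, which you do not establish --- you yourself flag the circularity in your second attempt, and the fallback via multiplication by $g^\nu$ and residuals is not carried out (and has the twist in the wrong factor). The paper's argument for this surjectivity is the real work: writing $\I_{X,Z} = f\Osh_P \oplus \Osh_{P_1} \oplus \dots \oplus \Osh_{P_r}$ in local coordinates (with $f$ an affine equation for $C$ and $P, P_1, \dots, P_r$ the connected pieces of $Z$ along $E$), one builds an auxiliary finite scheme $Y \subset \nu E$ with a surjection $\Osh_Y \xrightarrow{\,\cdot f\,} \I_{X,Z}$ and with $\length(Y \cap E) \le \length(Z \cap E) \le m$. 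Then Corollary~\ref{HZN}(ii) gives $\HH^1(\I_Y(m-1,\nu-1)) = 0$, so the monomials $x^iz^j$ with $0 \le i \le m-1$, $0 \le j \le \nu-1$ span $\HH^0(\Osh_Y)$; hence the polynomials $fx^iz^j$ span $\HH^0(\I_{X,Z})$. Each of these has bidegree at most $(m, c+\nu-1)$ and vanishes on $X$, so it lifts to $\HH^0(\I_X(m,n))$ precisely because $n \ge c+\nu-1$. This explicit local lifting --- and in particular the construction of $Y$ and the bound $\length(Y\cap E)\le m$ --- is the step your proposal is missing.
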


\begin{proof}
Consider the ideal $\I_{X, Z}^{} \subset \Osh_Z^{}$ of $X$ in $Z$.
From the short exact sequence
\[
0 \lra \I_Z^{}(m, n) \lra \I_X^{}(m, n) \lra \I_{X, Z}^{} \lra 0
\]
we obtain the long exact cohomology sequence
\[
\HH^0(\I_X^{}(m, n)) \lra \HH^0(\I_{X, Z}^{})
\lra \HH^1(\I_Z^{}(m, n)) \lra \HH^1(\I_X^{}(m, n)) \lra \HH^1(\I_{X, Z}^{}).
\]
The space on the right vanishes because $\I_{X, Z}^{}$ is supported on a finite set.
We have reduced the lemma to proving that the first arrow is surjective.
We may assume that $Z$ is contained in the affine chart $U = \{ y \neq 0, \, w \neq 0 \}$.
We may assume that $C$ does not contain the line given by the equation $w = 0$.
We may further assume that the point $p = C \cap E$ is given by the equations $x = 0$, $z = 0$.
Choose a polynomial $f(x, z)$ of degree $1$ in the variable $x$ and of degree $c$ in the variable $z$,
which vanishes on $C \cap U$.
Let $P$ be the (possibly empty) subscheme of $Z$ that is concentrated on $p$.
Denote $e = \length(P \cap E)$.
Denote $Z_\red^{} \cap E \setminus \{ p \} = \{ p_1^{}, \dots, p_r^{} \}$.
For $1 \le k \le r$, let $P_k^{}$ be the subscheme of $Z$ that is concentrated on $p_k^{}$.
Let $\J = f \Osh_P^{} \subset \Osh_P^{}$ be the ideal sheaf of $P \cap C$ in $P$.
Note that
\[
\I_{X, Z}^{} = \J \oplus \Osh_{P_1}^{} \oplus \dots \oplus \Osh_{P_r}^{}.
\]
Let $\q \subset \Osh_p^{} = \CC[x, z]_{(x, z)}^{}$ be the annihilator of $\J_p^{}$.
Let $Q \subset \PP^1 \times \PP^1$ be the finite scheme concentrated at $p$ that is defined by $\q$.
By hypothesis, $z^\nu f = 0$ in $\Osh_P^{}$, hence $z^\nu \in \q$, and hence $Q \subset \nu E$.
According to Corollary~\ref{OP}, the set of polynomials
\[
\{ f x^i z^j \mid 0 \le i \le e - 1, \ 0 \le j \le \nu(P) - 1 \}
\]
generates $\J_p^{}$ as a $\CC$-vector space.
It follows that the set of polynomials
\[
\{ x^i z^j \mid 0 \le i \le e - 1, \ 0 \le j \le \nu(P) - 1 \}
\]
generates $\Osh_{Q, p}^{}$ as a $\CC$-vector space,
hence $\{ x^i \mid 0 \le i \le e - 1 \}$ generates $\Osh_{Q \cap E, p}^{}$,
and hence $\length(Q \cap E) \le e$.
The finite scheme $Y = Q \cup P_1^{} \cup \dots \cup P_r^{}$ is contained in $\nu E$.
It satisfies the condition $\length(Y \cap E) \le \length(Z \cap E) \le m$.
According to Corollary~\ref{HZN}(ii), $\HH^1(\I_Y^{}(m - 1, \nu - 1)) = \{ 0 \}$.
It follows that the set of polynomials\[
\{ x^i z^j \mid 0 \le i \le m - 1, \ 0 \le j \le \nu - 1 \}
\]
generates $\HH^0(\Osh_Y^{})$ as a $\CC$-vector space.
Since $f$ does not vanish at $p_1^{}, \dots, p_r^{}$,
we deduce that the set of polynomials
\[
G = \{ f x^i z^j \mid 0 \le i \le m - 1, \ 0 \le j \le \nu - 1 \}
\]
generates $\HH^0(f \Osh_Y^{}) = \HH^0(\I_{X, Z}^{})$ as a $\CC$-vector space.
Each $g \in G$ vanishes on $C \cap U$, so it vanishes on $X$.
The degree of $g$ in the variable $x$ is at most $m$.
The degree of $g$ in the variable $z$ is at most $c + \nu - 1 \le n$.
Given $g \in G$, we consider the form $\bar{g} = g(\frac{x}{y}, \frac{z}{w}) y^m w^n \in \HH^0(\I_X^{}(m, n))$.
The set $\{ \bar{g} \mid g \in G \}$ maps to a set of generators of $\HH^0(\I_{X, Z}^{})$ as a $\CC$-vector space.
We deduce that the map $\HH^0(\I_X^{}(m, n)) \to \HH^0(\I_{X, Z}^{})$ is surjective,
which concludes the proof of the lemma.
\end{proof}

\begin{remark}
\label{HZC}
Let $C \subset \PP^1 \times \PP^1$ be an irreducible curve of bidegree $(1, c)$.
Let $Z \subset C$ be a finite scheme of length $l$.
Let $m \ge 0$ and $n \ge c - 1$ be integers.
Then
\[
\HH^1(\I_Z^{}(m, n)) \simeq \HH^1(\Osh_{\PP^1}^{}(c^{} m + n - l)).
\]
Indeed, from the exact sequence
\[
0 \lra \I_C^{}(m, n) \simeq \Osh(m - 1, n - c) \lra \I_Z^{}(m, n) \lra \I_{Z, C}^{}(m, n) \lra 0,
\]
and from the vanishing of $\HH^q(\Osh(m - 1, n - c))$ for $q = 1$, $2$,
we obtain the isomorphism
\[
\HH^1(\I_Z^{}(m, n)) \simeq \HH^1(\I_{Z, C}^{}(m, n)).
\]
Note that $\I_{Z, C}^{}(m, n)$ is supported on $C \simeq \PP^1$
and $\I_{Z, C}^{}(m, n) |_C \simeq \Osh_{\PP^1}^{}(c^{} m + n - l)$.
\end{remark}

\begin{proposition}
\label{ZC}
Let $C \subset \PP^1 \times \PP^1$ be a curve of bidegree $(1, c)$.
Let $Z \subset C$ be a finite scheme of length $l$.
Let $m \ge 1$ and $n \ge c - 1$ be integers such that $m + n \ge l - 1$.
Assume that $\length(Z \cap D) \le n + 1$ for every line $D \subset \PP^1 \times \PP^1$ of bidegree $(1, 0)$.
Assume that $\length(Z \cap E) \le m$ for every line $E \subset \PP^1 \times \PP^1$ of bidegree $(0, 1)$.
Then
\[
\HH^1(\I_Z^{}(m, n)) = \{ 0 \}.
\]
\end{proposition}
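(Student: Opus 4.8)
The plan is to reduce this statement, by induction on the bidegree of the curve $C$, to the case where $C$ is irreducible, and then to invoke Remark~\ref{HZC}. The curve $C$ has bidegree $(1, c)$, so any decomposition of $C$ into irreducible components must consist of exactly one component of bidegree $(1, c')$ for some $0 \le c' \le c$, together with $c - c'$ components of bidegree $(0, 1)$, i.e.\ lines of the form $\nu_1 E_1, \dots, \nu_s E_s$ (counted with multiplicity). First I would treat the base case: if $C$ is irreducible of bidegree $(1, c)$, then by Remark~\ref{HZC} we have $\HH^1(\I_Z^{}(m, n)) \simeq \HH^1(\Osh_{\PP^1}^{}(cm + n - l))$, and since $m \ge 1$ gives $cm + n \ge c + n \ge c + (c - 1)$ while also $m + n \ge l - 1$ forces $cm + n \ge m + n \ge l - 1$, the degree $cm + n - l \ge -1$, so the $\HH^1$ on $\PP^1$ vanishes. (If $c = 0$, so $C$ is a single line of bidegree $(1,0)$, one instead argues directly: $\length(Z) = l \le m + n + 1 \le n + 1 + (m-1)$, but more simply $Z \subset C \cong \PP^1$ and $\I_{Z,C}(m,n)|_C \cong \Osh_{\PP^1}(n - l)$; the hypothesis $\length(Z \cap D) \le n+1$ applied to $D = C$ gives $l \le n+1$, so $n - l \ge -1$ and the $\HH^1$ vanishes.)

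For the inductive step, suppose $C$ contains a line $E$ of bidegree $(0,1)$ as a component, with multiplicity $\nu \ge 1$, and write $C = C' \cup \nu E$ where $C'$ is a curve of bidegree $(1, c - \nu)$ not containing $E$ (choosing $E$ to be a component of maximal multiplicity, or simply peeling off one reduced line at a time, whichever makes the bookkeeping cleanest). Set $X = Z \cap C'$. The hypotheses of Lemma~\ref{ZCE} must be checked: we need $n \ge c'' + \nu - 1$ where $c'' = c - \nu$ is the $\PP^1$-degree of the second factor of $C'$, i.e.\ $n \ge (c - \nu) + \nu - 1 = c - 1$, which is exactly our hypothesis; we need $m \ge 1$, which we have; and we need $\length(Z \cap E) \le m$, which holds by assumption. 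Lemma~\ref{ZCE} then yields
\[
\HH^1(\I_Z^{}(m, n)) \simeq \HH^1(\I_X^{}(m, n)).
\]
Now $X \subset C'$ is a finite scheme of some length $l' \le l$, the curve $C'$ has bidegree $(1, c - \nu)$ with $c - \nu < c$, and the side conditions $\length(X \cap D) \le \length(Z \cap D) \le n + 1$ and $\length(X \cap E') \le \length(Z \cap E') \le m$ are inherited. To close the induction I need $m + n \ge l' - 1$, and since $l' \le l$ and $m + n \ge l - 1$, this holds. Also $m \ge 1$ and $n \ge (c - \nu) - 1$ are satisfied. Applying the inductive hypothesis to $(X, C')$ gives $\HH^1(\I_X^{}(m, n)) = \{0\}$, hence $\HH^1(\I_Z^{}(m, n)) = \{0\}$.

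The main obstacle I anticipate is bookkeeping: making sure the numerical hypotheses of Lemma~\ref{ZCE} line up when $E$ has multiplicity $\nu > 1$, and in particular correctly identifying which curve plays the role of ``$C$'' in the lemma (the residual curve $C'$ of lower degree, \emph{including} any remaining lines of bidegree $(0,1)$) versus which multiple line ``$\nu E$'' is being stripped off. A subtle point is that $Z$ is only assumed to lie in $C$, not to be supported away from $E$, so $X = Z \cap C'$ genuinely can be a proper subscheme with $l' < l$, and one must verify the inductive hypothesis applies with the reduced length $l'$; this is harmless since the constraint $m + n \ge l - 1$ only gets easier. One should also double-check the degenerate cases, e.g.\ when $C$ is \emph{entirely} a union of lines of bidegree $(0,1)$ plus possibly one line of bidegree $(1,0)$ — but these are covered by peeling off the $(0,1)$-lines one at a time until only an irreducible $(1, c')$-curve (possibly with $c' = 0$) remains, which is the base case handled above.
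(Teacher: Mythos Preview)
Your proposal is correct and follows essentially the same route as the paper: induction on the number of irreducible components of $C$, with the base case handled by Remark~\ref{HZC} (splitting into $c=0$ and $c>0$ exactly as you do) and the inductive step by peeling off a component $\nu E$ via Lemma~\ref{ZCE}. Your verification of the numerical hypotheses for Lemma~\ref{ZCE} and for the inductive call on $X \subset C'$ is just a more explicit version of what the paper leaves implicit.
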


\begin{proof}
We proceed by induction on the number of irreducible components of $C$.
If $C$ is irreducible, then, by virtue of Remark~\ref{HZC},
\[
\HH^1(\I_Z^{}(m, n)) \simeq \HH^1(\Osh_{\PP^1}^{}(c^{} m + n - l)).
\]
The r.h.s.\ vanishes because $c^{} m + n - l \ge -1$.
Indeed, if $c = 0$, then this inequality holds by hypothesis.
If $c > 0$, then $c^{} m + n - l \ge m + n - l \ge -1$.
Assume now that $C = C' \cup \nu E$,
where $C'$ is a curve of bidegree $(1, c')$ which does not contain $E$, and $\nu \ge 1$.
Denote $X = Z \cap C'$.
According to Lemma~\ref{ZCE},
\[
\HH^1(\I_Z^{}(m, n)) \simeq \HH^1 (\I_X^{}(m, n)).
\]
Since $C'$ has fewer irreducible components than $C$,
we may apply the induction hypothesis to $X \subset C'$
in order to deduce that the r.h.s.\ vanishes.
\end{proof}

\begin{theorem}
\label{vanishing}
Let $Z \subset \PP^1 \times \PP^1$ be a finite scheme of length $l$.
Let $m$ and $n$ be non-negative integers such that $m + n \ge l - 1$.
Assume that $\length(Z \cap D) \le n + 1$ for every line $D \subset \PP^1 \times \PP^1$ of bidegree $(1, 0)$.
Assume that $\length(Z \cap E) \le m + 1$ for every line $E \subset \PP^1 \times \PP^1$ of bidegree $(0, 1)$.
Then $\HH^1(\I_Z^{}(m, n)) = \{ 0 \}$.
\end{theorem}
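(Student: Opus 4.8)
The plan is to argue by induction on $l$, after using the symmetry of $\PP^1\times\PP^1$ that interchanges the two rulings to reduce to the case $m\le n$. The base case $l=0$ is immediate, since $\HH^1(\Osh(m,n))=\{0\}$ for $m,n\ge 0$. For $l\ge 1$ (so $Z\neq\emptyset$) I would split into two cases according to whether some line of bidegree $(0,1)$ meets $Z$ in the extremal length $m+1$.

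\emph{Case A: $\length(Z\cap E)\le m$ for every line $E$ of bidegree $(0,1)$.} Then in fact $m\ge 1$, for otherwise this inequality forces $Z=\emptyset$. Since $l\le m+n+1\le 2n+1$ (here we use $m\le n$), the kernel of the evaluation map $\HH^0(\Osh(1,n))\to\HH^0(\Osh_Z(1,n))$ has dimension $\ge 2(n+1)-l\ge 1$; a nonzero section of it cuts out a curve $C$ of bidegree $(1,n)$ containing $Z$. All the hypotheses of Proposition~\ref{ZC} are now in force with $c=n$ (note $n\ge c-1$ holds trivially, and the bound $\length(Z\cap D)\le n+1$ is our hypothesis), so that result gives $\HH^1(\I_Z(m,n))=\{0\}$.

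\emph{Case B: there is a line $E_0$ of bidegree $(0,1)$ with $\length(Z\cap E_0)=m+1$.} Let $Z'$ be the residual scheme, with ideal $\I_{Z'}=(\I_Z:\I_{E_0})$; then $Z'\subseteq Z$ and $\length(Z')=l-(m+1)<l$. I would invoke the residuation sequence
\[
0\lra\I_{Z'}(m,n-1)\lra\I_Z(m,n)\lra\I_{Z\cap E_0,E_0}(m,n)\lra 0 ,
\]
in which the last term is supported on $E_0\simeq\PP^1$; since $\Osh(m,n)|_{E_0}\simeq\Osh_{\PP^1}(m)$ and $\length(Z\cap E_0)=m+1$, it is $\Osh_{\PP^1}(-1)$, which has no $\HH^1$. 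Hence $\HH^1(\I_Z(m,n))$ is a quotient of $\HH^1(\I_{Z'}(m,n-1))$. If $Z'=\emptyset$ (equivalently $l=m+1$), then $\HH^1(\I_{Z'}(m,n-1))=\HH^1(\Osh(m,n-1))=\{0\}$ and we are done. Otherwise $n\ge 1$ — if $n=0$ then $m=0$, $l\le 1$, forcing $Z'=\emptyset$ — and I would apply the induction hypothesis to $Z'$ with the pair $(m,n-1)$. Its hypotheses are met: the inequality $m+(n-1)\ge\length(Z')-1$ rearranges to $2m+n+1\ge l$, which follows from $m+n\ge l-1$; the bound $\length(Z'\cap E)\le m+1$ for lines $E$ of bidegree $(0,1)$ is inherited from $Z'\subseteq Z$; and the bound $\length(Z'\cap D)\le(n-1)+1=n$ for lines $D$ of bidegree $(1,0)$ holds simply because $\length(Z')=l-m-1\le n$, again by $m+n\ge l-1$. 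Thus $\HH^1(\I_{Z'}(m,n-1))=\{0\}$, which completes the induction.

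I expect the main subtlety to lie in the choice to remove a line of bidegree $(0,1)$ rather than one of bidegree $(1,0)$, and in seeing why this closes the induction. Removing $E_0$ drops the bidegree to $(m,n-1)$: the hypothesis on lines of bidegree $(0,1)$ is untouched for the residual, while the hypothesis on lines of bidegree $(1,0)$ required for $(m,n-1)$ comes for free, because $Z'$ has at most $n$ points in total — and this shortness of the residual is exactly what the numerical assumption $m+n\ge l-1$ buys. Removing a line of bidegree $(1,0)$ instead would lower $m$, and then the residual could still carry $m+1>(m-1)+1$ points on some line of bidegree $(0,1)$, breaking the induction. Finally, Case~A, where no line of bidegree $(0,1)$ is extremal, is precisely the configuration already handled by Proposition~\ref{ZC}; the only point needing care there is the production of a genus-zero curve of bidegree $(1,n)$ through $Z$, which the normalization $m\le n$ makes possible by a count of sections of $\Osh(1,n)$.
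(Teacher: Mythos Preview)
Your proof is correct. Case~A is essentially the paper's argument---both invoke Proposition~\ref{ZC} after finding a curve of bidegree $(1,c)$ through $Z$ (you take $c=n$, the paper takes $c=\lfloor l/2\rfloor$; either works). In Case~B, however, the paper does not induct: it observes that if some horizontal line $E_0$ carries $m+1$ points of $Z$, then a short length count gives $\nu(Z)\le l-m\le n+1$, whence Corollary~\ref{HZN}(ii) yields the vanishing in one stroke. Your alternative---residuate along $E_0$, note that the trace term on $E_0$ is $\Osh_{\PP^1}(-1)$, and apply the induction hypothesis to $(Z',m,n-1)$, using the key fact that $\length(Z')=l-m-1\le n$ makes the vertical-line bound for $(m,n-1)$ automatic---bypasses the structure theory of subschemes of a multiple line developed in Section~\ref{line}. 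The paper's route is shorter once that theory is in hand; yours is more self-contained, needing only the standard residuation sequence and Proposition~\ref{ZC}.
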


\begin{proof}
By symmetry, we may assume that $n \ge m$.
If $m = 0$, then $n \ge l - 1 \ge \nu(Z) - 1$,
and the conclusion follows from Corollary~\ref{HZN}(ii).
Assume that $m \ge 1$.
If $\length(Z \cap E) = m + 1$ for some line $E \subset \PP^1 \times \PP^1$ of bidegree $(0, 1)$,
then $\nu(Z) \le l - m \le n + 1$, and we again apply Corollary~\ref{HZN}(ii).
Assume that  $\length(Z \cap E) \le m$ for every line $E \subset \PP^1 \times \PP^1$ of bidegree $(0, 1)$.
Write $l = 2 c$ or $l = 2 c + 1$ for an integer $c$.
Notice that $n \ge c$ and that $Z$ is contained in a curve of bidegree $(1, c)$.
The conclusion follows from Proposition~\ref{ZC}.
\end{proof}

\section{The Brill-Noether loci}
\label{loci}

\noindent
Consider integers $k \ge 0$, $l \ge 1$, $m \ge 0$ and $n \ge 0$ such that $m$ and $n$ are not both zero.
Consider the Brill-Noether locus
\[
\BN(k, l, (m, n)) = \{ Z \mid \dim_\CC^{} \HH^1(\I_Z^{}(m, n)) = k \} \subset \Hilb(l).
\]
By the semicontinuity theorem, this locus is locally closed,
so, for any $m$ and $n$ as above, we have a locally closed decomposition
\begin{equation}
\label{decomposition}
\Hilb(l) = \bigsqcup_{k \ge 0} \BN(k, l, (m, n)).
\end{equation}
In what follows we adopt the notations of section~\ref{line}.
Assume that $k \ge 1$.
Inside $\Hilb(l)$ we consider the subset $S(k, l, n)$ of subschemes $Z$ satisfying the equation
\begin{equation}
\label{vertical}
k = \sum_{1 \le i \le \kappa(Z)} \sum_{\substack{0 \le j \le \mu_i - 1 \\ \sigma(X_i)_j > n + 1}} (\sigma(X_i^{})_j^{} - n - 1),
\end{equation}
and the subset $T(k, l, m)$ of subschemes $Z$ satisfying the equation
\begin{equation}
\label{horizontal}
k = \sum_{1 \le j \le \lambda(Z)} \sum_{\substack{0 \le i \le \nu_j - 1 \\ \tau(Y_j)_i > m + 1}} (\tau(Y_j^{})_i^{} - m - 1).
\end{equation}
Inside $\Hilb(l)$ we consider the subset $\Ver(k, l)$ of schemes $Z$
for which there exists a line $D \subset \PP^1 \times \PP^1$ of bidegree $(1, 0)$ (a vertical line) such that $\length(Z \cap D) = k$.
Inside $\Hilb(l)$ we consider the subset $\Hor(k, l)$ of schemes $Z$
for which there exists a line $E \subset \PP^1 \times \PP^1$ of bidegree $(0, 1)$ (a horizontal line) such that $\length(Z \cap E) = k$.
Note that $\Ver(l, l)$ is the closed subvariety of $\Hilb(l)$ of schemes that are contained in a vertical line,
while $\Hor(l, l)$ is the closed subvariety of $\Hilb(l)$ of schemes that are contained in a horizontal line.
Both subvarieties are isomorphic to $\PP^1 \times \PP^{}{}^l$.

\begin{proposition}
\label{S}
Consider integers $l \ge 1$ and $n \ge 0$.
\begin{enumerate}
\item[\emph{(i)}]
If $l \le n + 1$, then $S(k, l, n) = \emptyset$ for all $k \ge 1$.
If $l > n + 1$, then $S(k, l, n)$ is nonempty if and only if $1 \le k \le l - n - 1$.
\item[\emph{(ii)}]
Assume that $l > n + 1$.
Then $S(l - n - 1, l, n) = \Ver(l, l)$.
\item[\emph{(iii)}]
Assume that $l \le 2 n + 3$.
Then $S(k, l, n) = \Ver(k + n + 1, l)$ for all $k \ge 1$.
\end{enumerate}
\end{proposition}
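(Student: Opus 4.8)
The plan is to argue every part directly from the combinatorial right-hand side of \eqref{vertical}, which for brevity I will denote $\beta(Z, n)$, writing $s_i = s_i(Z, n)$ for its $i$-th inner summand, so $\beta(Z,n) = \sum_i s_i$. I would first record the facts needed about the strings $\sigma(X_i) = \rho(X_i, \mu_i D_i)$: by Remark~\ref{rho} each consists of positive integers with $\sigma(X_i)_0 \ge \sigma(X_i)_1 \ge \cdots$; moreover $\sigma(X_i)_0 = \length(Z \cap D_i)$, $\sigma(X_i)_0 + \cdots + \sigma(X_i)_{\mu_i - 1} = \length(X_i)$, $\mu_i = \mu(X_i)$, and $\sum_i \length(X_i) = l$. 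The single estimate that powers everything: if at least one entry of $\sigma(X_i)$ exceeds $n+1$ and $t_i \ge 1$ is the number that do, then $s_i = \big(\sum_{\sigma(X_i)_j > n+1} \sigma(X_i)_j\big) - t_i(n+1) \le \length(X_i) - n - 1$, and equality holds if and only if $t_i = 1$ and \emph{every} entry of $\sigma(X_i)$ exceeds $n+1$, which by positivity of the entries forces $\mu_i = 1$ and $\sigma(X_i) = (\length(X_i))$.

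For part~(i), when $l \le n+1$ every entry satisfies $\sigma(X_i)_j \le \sigma(X_i)_0 = \length(Z \cap D_i) \le l \le n+1$, so $\beta(Z,n) = 0$ and $S(k,l,n)$ is empty for each $k \ge 1$. When $l > n+1$, set $S = \{i : s_i > 0\}$; if $\beta(Z,n) = k \ge 1$ then $S \ne \emptyset$ and the estimate yields $k \le \sum_{i \in S}(\length(X_i) - n - 1) \le l - |S|(n+1) \le l - n - 1$, the asserted upper bound; for the existence, given $1 \le k \le l - n - 1$ I would take $Z = Z' \sqcup Z''$ with $Z'$ a length-$(k+n+1)$ subscheme of a vertical line $D_1$ and $Z''$ a set of $l - k - n - 1$ reduced points on pairwise distinct vertical lines, all distinct from $D_1$, so that $\mu_1 = 1$, $\sigma(X_1) = (k+n+1)$, the other strings are $(1)$, and $\beta(Z,n) = (k+n+1) - (n+1) = k$. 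For part~(ii), take $Z \in S(l-n-1,l,n)$ and set $k = l-n-1 \ge 1$; the estimate gives $|S|(n+1) \le l - k = n+1$, so $|S| = 1$, say $S = \{i\}$, and then $k = s_i \le \length(X_i) - n - 1 \le l - n - 1 = k$ forces $\length(X_i) = l$ (so $X_i = Z$) and equality in the estimate, hence $\mu_i = 1$ and $Z$ is contained in the vertical line $D_i$; the reverse inclusion is the $Z' = Z$ case of the computation above. Thus $S(l-n-1,l,n) = \Ver(l,l)$.

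Part~(iii) is the substance, and $l \le 2n+3$ enters it twice; one may assume $1 \le k \le l - n - 1$, as otherwise both $S(k,l,n)$ (by part~(i)) and $\Ver(k+n+1,l)$ are empty. For $\Ver(k+n+1,l) \subseteq S(k,l,n)$: if $\length(Z \cap D) = k+n+1$ for a vertical line $D$, then $D$ is a component of $M(Z)$, say $D = D_1$, so $\sigma(X_1)_0 = k+n+1$ and $\sum_{j \ge 1}\sigma(X_1)_j = \length(X_1) - (k+n+1) \le l - (k+n+1) \le n+1$, whence every entry of $\sigma(X_1)$ beyond the first is $\le n+1$ and $s_1 = \sigma(X_1)_0 - n - 1 = k$; while for $i \ge 2$, $\length(X_i) \le l - (k+n+1) \le n+1$ gives $\sigma(X_i)_0 \le n+1$ and $s_i = 0$, so $\beta(Z,n) = k$. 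For $S(k,l,n) \subseteq \Ver(k+n+1,l)$: take $Z \in S(k,l,n)$ with $k \ge 1$ and $S = \{i : s_i > 0\} \ne \emptyset$; for $i \in S$ the estimate gives $\length(X_i) \ge s_i + n + 1 \ge n+2$, so $|S|(n+2) \le \sum_{i \in S}\length(X_i) \le l \le 2n+3 < 2(n+2)$ forces $|S| = 1$, say $S = \{1\}$ with $s_1 = k$; if $\sigma(X_1)$ had a second entry exceeding $n+1$ we would get $\sigma(X_1)_0 + \sigma(X_1)_1 \ge 2(n+2) > l \ge \length(X_1)$, impossible, so only the first entry contributes and $\length(Z \cap D_1) = \sigma(X_1)_0 = k + n + 1$, giving $Z \in \Ver(k+n+1,l)$. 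The main obstacle is precisely these two counting steps — extracting from the single scalar identity \eqref{vertical} together with $l \le 2n+3$ that exactly one vertical line carries the jump and that it meets $Z$ in exactly $k+n+1$ points; once the displayed estimate of the first paragraph is in hand, the remainder is bookkeeping.
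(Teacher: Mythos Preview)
Your proof is correct and follows essentially the same approach as the paper's: both argue directly from the combinatorics of equation~\eqref{vertical} using that the strings $\sigma(X_i)$ are non-increasing, positive, and sum to $\length(X_i)$, with the key step being that $l \le 2n+3$ forbids two entries exceeding $n+1$. Your presentation is slightly more systematic in isolating the estimate $s_i \le \length(X_i) - n - 1$ and its equality case up front, whereas the paper rewrites \eqref{vertical} ad hoc to extract the same bound, but the substance is identical.
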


\begin{proof}
(i) Assume that $k \ge 1$ and $Z \in S(k, l, n)$.
From equation~\eqref{vertical} we deduce that there are indices $i$ and $j$ such that $\sigma(X_i^{})_j^{} > n + 1$.
From Remark~\ref{rho} we obtain the inequality $\sigma(X_i^{})_0^{} \ge \sigma(X_i^{})_j^{} > n + 1$.
Equation~\eqref{vertical} can be rewritten in the form
\[
k = \sigma(X_i^{})_0^{} - n - 1 + \sum_{\substack{1 \le j \le \mu_i - 1 \\ \sigma(X_i)_j > n + 1}} \!\!\!\!\!\!\! (\sigma(X_i^{})_j^{} - n - 1)
+ \sum_{\substack{1 \le h \le \kappa(Z) \\ h \neq i}} \sum_{\substack{0 \le j \le \mu_h - 1 \\ \sigma(X_h)_j > n + 1}}
\!\!\!\!\!\!\! (\sigma(X_h^{})_j^{} - n - 1).
\]
This leads to the inequality
\begin{equation}
\label{strict}
k \le \sigma(X_i^{})_0^{} - n - 1 + \sum_{1 \le j \le \mu_i - 1} \sigma(X_i^{})_j^{}
+ \sum_{\substack{1 \le h \le \kappa(Z) \\ h \neq i}} \sum_{0 \le j \le \mu_h - 1} \sigma(X_h^{})_j^{}.
\end{equation}
According to Remark~\ref{rho}, the r.h.s.\ equals $l - n - 1$.
Thus, if $S(k, l, n)$ is nonempty, then $k \le l - n - 1$ forcing $l > n + 1$.
Assume now that $1 \le k \le l - n - 1$.
Consider distinct lines $D_1^{}, \dots, D_{l - k - n}^{}$ of bidegree $(1, 0)$ in $\PP^1 \times \PP^1$.
Let $X_1^{} \subset D_1^{}$ be a subscheme of length $k + n + 1$.
For $2 \le i \le l - k - n$, let $X_i^{} \subset D_i^{}$ be a point.
Put $Z = X_1^{} \cup \dots \cup X_{l - k - n}^{}$ and notice that $Z \in S(k, l, n)$.

\medskip

\noindent
(ii) Note that inequality~\eqref{strict} is strict if $\mu(Z) > 1$.
For $Z \in S(l - n - 1, l, n)$, inequality~\eqref{strict} is not strict, hence $\mu(Z) = 1$,
that is, $Z$ is contained in a line of bidegree $(1, 0)$.
This proves the inclusion $S(l - n - 1, l, n) \subset \Ver(l, l)$.
The reverse inclusion is obvious.

\medskip

\noindent
(iii) Assume that $Z \in S(k, l, n)$.
As noted above, there is an index $i$ such that $\sigma(X_i^{})_0^{} > n + 1$.
For all indices $1 \le j \le \mu_i^{} - 1$ we have the inequality $\sigma(X_i^{})_j^{} \le n + 1$;
for all indices $h \neq i$, $1 \le h \le \kappa(Z)$, and $j$ we have the inequality $\sigma(X_h^{})_j^{} \le n + 1$,
otherwise $l \ge 2 n + 4$.
Equation~\eqref{vertical} takes the form $k = \sigma(X_i^{})_0^{} - n - 1$, that is, $\length(Z \cap D_i^{}) = k + n + 1$.
Thus, $Z \in \Ver(k + n + 1, l)$.
This proves the inclusion $S(k, l, n) \subset \Ver(k + n + 1, l)$.
The reverse inclusion can be proved analogously.
\end{proof}

\begin{proposition}
\label{T}
Consider integers $l \ge 1$ and $m \ge 0$.
\begin{enumerate}
\item[\emph{(i)}]
If $l \le m + 1$, then $T(k, l, m) = \emptyset$ for all $k \ge 1$.
If $l > m + 1$, then $T(k, l, m)$ is nonempty if and only if $1 \le k \le l - m - 1$.
\item[\emph{(ii)}]
Assume that $l > m + 1$.
Then $T(l - m - 1, l, m) = \Hor(l, l)$.
\item[\emph{(iii)}]
Assume that $l \le 2 m + 3$.
Then $T(k, l, m) = \Hor(k + m + 1, l)$ for all $k \ge 1$.
\end{enumerate}
\end{proposition}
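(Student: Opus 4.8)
The plan is to transcribe the proof of Proposition~\ref{S} essentially verbatim, interchanging the two rulings of the quadric: systematically replace the string $\sigma$ by $\tau$, the restricted schemes $X_i^{}$ by $Y_j^{}$, the integers $\mu_i^{},\kappa(Z)$ by $\nu_j^{},\lambda(Z)$, the vertical lines $D_i^{}$ by the horizontal lines $E_j^{}$, the threshold $n+1$ by $m+1$, and the subvariety $\Ver$ by $\Hor$; in particular equation~\eqref{horizontal} plays the role of \eqref{vertical}. I would then run the three parts in the same order as before.

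For (i): take $k\ge 1$ and $Z\in T(k,l,m)$. From \eqref{horizontal} there exist indices $i$ and $j$ with $\tau(Y_j^{})_i^{}>m+1$, and Remark~\ref{rho} gives $\tau(Y_j^{})_0^{}\ge\tau(Y_j^{})_i^{}>m+1$. Isolating the $(j,0)$-summand of \eqref{horizontal} and then dropping the constraint ``$>m+1$'' on all remaining summands yields the analogue of inequality~\eqref{strict},
\[
k \le \tau(Y_j^{})_0^{} - m - 1 + \sum_{1 \le i \le \nu_j - 1} \tau(Y_j^{})_i^{} + \sum_{\substack{1 \le h \le \lambda(Z) \\ h \neq j}} \sum_{0 \le i \le \nu_h - 1} \tau(Y_h^{})_i^{},
\]
whose right-hand side equals $l-m-1$ by Remark~\ref{rho}. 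Hence $T(k,l,m)\ne\emptyset$ forces $l>m+1$ and $1\le k\le l-m-1$. Conversely, given $1\le k\le l-m-1$, I would pick distinct horizontal lines $E_1^{},\dots,E_{l-k-m}^{}$, let $Y_1^{}\subset E_1^{}$ be a subscheme of length $k+m+1$ and $Y_j^{}\subset E_j^{}$ a single point for $2\le j\le l-k-m$, and set $Z=Y_1^{}\cup\dots\cup Y_{l-k-m}^{}$; then $\tau(Y_1^{})=(k+m+1)$, $\tau(Y_j^{})=(1)$ for $j\ge 2$, and \eqref{horizontal} evaluates to $k$, so $Z\in T(k,l,m)$.

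For (ii): the displayed inequality above is strict as soon as $\nu(Z)>1$, so for $Z\in T(l-m-1,l,m)$ it must be an equality; therefore $\nu(Z)=1$, i.e.\ $Z$ is contained in a horizontal line, which gives $T(l-m-1,l,m)\subset\Hor(l,l)$, and the reverse inclusion is immediate. For (iii): if $l\le 2m+3$ and $Z\in T(k,l,m)$, pick $j$ with $\tau(Y_j^{})_0^{}>m+1$; any further summand $\tau(Y_{j'}^{})_{i'}^{}>m+1$ would make the total length at least $2m+4$, so \eqref{horizontal} collapses to $k=\tau(Y_j^{})_0^{}-m-1$, i.e.\ $\length(Z\cap E_j^{})=k+m+1$ and $Z\in\Hor(k+m+1,l)$; conversely, if $\length(Z\cap E)=k+m+1$ for some horizontal line $E$, then the remaining summands of \eqref{horizontal} have total size $l-k-m-1\le m+1$, so each is $\le m+1$ individually and \eqref{horizontal} again collapses to the value $k$, whence the inclusion reverses.

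There is no real difficulty here; the only thing to be careful about is the bookkeeping — applying the $\sigma\leftrightarrow\tau$, $(1,0)\leftrightarrow(0,1)$ symmetry uniformly (in particular keeping the threshold at $m+1$ rather than $n+1$), and checking in the construction for (i) that $Y_1^{}$ really satisfies $\nu(Y_1^{})=1$ and $\tau(Y_1^{})_0^{}=k+m+1$, so that \eqref{horizontal} returns exactly $k$.
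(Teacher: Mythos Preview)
Your proposal is correct and is exactly the approach intended by the paper: Proposition~\ref{T} is stated without proof precisely because it is the mirror image of Proposition~\ref{S} under the swap of the two rulings, and your transcription carries this out faithfully. Your extra line justifying the reverse inclusion in (iii)---bounding the total of the remaining $\tau$-entries by $l-k-m-1\le m+1$---is a welcome detail that the paper leaves to the reader.
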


\begin{theorem}
\label{BN}
Let $k \ge 1$, $l \ge 2$, $m \ge 0$ and $n \ge 0$ be integers such that $m + n \ge l - 1$.
Then we have the locally closed decomposition
\[
\BN(k, l, (m, n)) = S(k, l, n) \sqcup T(k, l, m).
\]
\end{theorem}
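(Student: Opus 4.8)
# Proof Proposal for Theorem~\ref{BN}

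The plan is to prove the equality of sets $\BN(k, l, (m, n)) = S(k, l, n) \sqcup T(k, l, m)$ by a double inclusion, treating separately the ``easy'' containments (that $S$ and $T$ land inside the Brill--Noether locus, and that they are disjoint) and the ``hard'' containment (that every $Z$ with $\dim_\CC \HH^1(\I_Z(m, n)) = k$ lies in $S(k, l, n)$ or $T(k, l, m)$). The local closedness of the decomposition will follow from the fact that $\BN(k, l, (m, n))$ is locally closed by semicontinuity, together with the explicit description; one should remark that $S(k, l, n)$ and $T(k, l, m)$ are themselves locally closed, which can be read off from Propositions~\ref{S} and \ref{T} since they are either empty or coincide with a $\Ver$ or $\Hor$ stratum.

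First I would prove the inclusion $S(k, l, n) \cup T(k, l, m) \subseteq \BN(k, l, (m, n))$. Suppose $Z \in S(k, l, n)$. The defining equation~\eqref{vertical} of $S$ is exactly the right-hand side of the formula in Corollary~\ref{HZM}(i), so I need that corollary to apply, i.e.\ that $m \ge \mu(Z) - 1$ and $n \ge -1$. The second is automatic; for the first, note that if $Z \in S(k, l, n)$ with $k \ge 1$ then, as in the proof of Proposition~\ref{S}(i), some $\sigma(X_i)_0 > n + 1$, whence $\length(Z \cap D_i) \ge n + 2$, and combined with $\mu(Z) \le l - \length(Z \cap D_i) + 1$ (the remaining points need at most that many further vertical lines — here one uses Remark~\ref{rho}) together with $m + n \ge l - 1$ one gets $\mu(Z) \le m + 1$. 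Then Corollary~\ref{HZM}(i) gives $\dim_\CC \HH^1(\I_Z(m, n)) = k$, so $Z \in \BN(k, l, (m, n))$. The argument for $T(k, l, m)$ is symmetric, using Corollary~\ref{HZN}(i) in place of Corollary~\ref{HZM}(i).

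Next, disjointness: $S(k, l, n) \cap T(k, l, m) = \emptyset$ for $k \ge 1$. If $Z$ lay in the intersection, then by Proposition~\ref{S}(i) we would have $l > n + 1$ and by Proposition~\ref{T}(i) we would have $l > m + 1$; adding, $2l > m + n + 2 \ge l + 1$, which is no contradiction by itself, so a sharper argument is needed. The point is that $Z \in S(k, l, n)$ forces (via Remark~\ref{rho} applied to $\sigma(X_i)$, as in the proof of Proposition~\ref{S}) the existence of a vertical line $D$ with $\length(Z \cap D) \ge n + 2$, hence $\length(Z \cap D) \ge n + 2 \ge l - m + 1$, so the complementary scheme $Z \setminus (Z \cap D)$ has length $\le m - 1$, and in particular every horizontal line $E$ satisfies $\length(Z \cap E) \le 1 + (m-1) = m$ — wait, more care is needed since $D \cap E$ is a single point; in any case $\length(Z \cap E) \le m + 1$, which makes the right-hand side of~\eqref{horizontal} vanish by Remark~\ref{rho}, contradicting $Z \in T(k, l, m)$ with $k \ge 1$. \emph{This step — pinning down the combinatorics so that membership in $S$ genuinely obstructs membership in $T$ — is where I expect the main subtlety}, and it may require invoking that $\length(Z \cap E) \le m$ rather than merely $\le m+1$, achieved by noting that when $\length(Z \cap D) \ge n+2$ the bound $m + n \ge l - 1$ is tight enough.

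Finally, the hard inclusion $\BN(k, l, (m, n)) \subseteq S(k, l, n) \sqcup T(k, l, m)$. Let $Z$ satisfy $\dim_\CC \HH^1(\I_Z(m, n)) = k \ge 1$. By the Vanishing Theorem~\ref{vanishing}, it is impossible that both $\length(Z \cap D) \le n + 1$ for every vertical line $D$ and $\length(Z \cap E) \le m + 1$ for every horizontal line $E$; hence at least one of these two conditions fails. Suppose the first fails: some vertical line $D$ has $\length(Z \cap D) \ge n + 2$. Then $\mu(Z) \le l - (n+2) + 1 = l - n - 1 \le m$ using $m + n \ge l - 1$, so Corollary~\ref{HZM}(i) applies and shows that $Z$ satisfies equation~\eqref{vertical} with right-hand side equal to $k$, i.e.\ $Z \in S(k, l, n)$. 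Symmetrically, if the second condition fails, then $\nu(Z) \le l - m - 1 \le n$, Corollary~\ref{HZN}(i) applies, and $Z \in T(k, l, m)$. Combined with the disjointness already shown, exactly one alternative holds, giving the claimed decomposition. One should double-check the edge case where \emph{both} conditions fail simultaneously: then $Z$ lies in both a short-complement vertical configuration and a short-complement horizontal one, but by the disjointness argument this cannot produce the same $k \ge 1$ in both formulas, so in fact at most one of the two conditions can fail — alternatively, argue directly that the two failures are mutually exclusive by the length count $2l \le (n+2) + (m+2) + \text{(overlap corrections)}$, which contradicts $m + n \ge l - 1$ for $l \ge 2$.
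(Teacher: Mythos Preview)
Your overall strategy is exactly the paper's: use Theorem~\ref{vanishing} for the inclusion $\BN \subset S \sqcup T$, use Corollaries~\ref{HZM}(i) and~\ref{HZN}(i) for the reverse inclusion after checking $\mu(Z)\le m$ (resp.\ $\nu(Z)\le n$), and argue disjointness via a length count. Two places need tightening.

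\emph{Disjointness.} Your ``complementary scheme'' heuristic is not rigorous: a fat point at $p=D\cap E$ can contribute large length to both $Z\cap D$ and $Z\cap E$, so the bound ``the part of $Z$ off $D$ has length $\le m-1$'' does not directly control $\length(Z\cap E)$. The clean argument (which you gesture at in your last sentence, though the inequality you wrote down is garbled) is the one in the paper: if some vertical line $D$ satisfies $\length(Z\cap D)\ge n+2$ and some horizontal line $E$ satisfies $\length(Z\cap E)\ge m+2$, then since $D\cap E$ is a single reduced point,
\[
m+n+2 \;\ge\; l+1 \;\ge\; \length(Z\cap(D\cup E))+1 \;\ge\; \length(Z\cap D)+\length(Z\cap E) \;\ge\; m+n+4,
\]
a contradiction. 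This single chain of inequalities handles both the disjointness of $S$ and $T$ and your ``edge case where both conditions fail''.

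\emph{Local closedness.} Your claim that local closedness of $S(k,l,n)$ and $T(k,l,m)$ ``can be read off from Propositions~\ref{S} and~\ref{T} since they are either empty or coincide with a $\Ver$ or $\Hor$ stratum'' is not correct in general: Proposition~\ref{S}(iii) identifies $S(k,l,n)$ with $\Ver(k+n+1,l)$ only under the extra hypothesis $l\le 2n+3$, which is not assumed in the theorem. The paper instead applies the set-theoretic equality just proved with $m$ replaced by $l$: since $T(k,l,l)=\emptyset$ by Proposition~\ref{T}(i), one obtains $S(k,l,n)=\BN(k,l,(l,n))$, which is locally closed by semicontinuity; symmetrically $T(k,l,m)=\BN(k,l,(m,l))$.
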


\begin{proof}
Take $Z \in \BN(k, l, (m, n))$.
According to Theorem~\ref{vanishing},
$\length(Z \cap D) \ge n + 2$ for some line $D \subset \PP^1 \times \PP^1$ of bidegree $(1, 0)$,
or $\length(Z \cap E) \ge m + 2$ for some line $E \subset \PP^1 \times \PP^1$ of bidegree $(0, 1)$.
In the first case, $\mu(Z) \le l - n - 1 \le m$,
so we may apply Corollary~\ref{HZM}(i) in order to obtain formula~\eqref{vertical}.
In the second case, $\nu(Z) \le l - m - 1\le n$,
so we may apply Corollary~\ref{HZN}(i) in order to obtain formula~\eqref{horizontal}.
This proves the inclusion ``$\subset$''.
Take $Z \in S(k, l, n)$.
As mentioned in the proof of Proposition~\ref{S}(i),
$\length(Z \cap D_i^{}) = \sigma(X_i^{})_0^{} \ge n + 2$ for an index $i$.
As noted above, this inequality allows us to apply Corollary~\ref{HZM}(i).
We deduce that $\dim_\CC^{} \HH^1(\I_Z^{}(m, n)) = k$.
Assume that $Z \in T(k, l, m)$.
There is an index $j$ such that $\length(Z \cap E_j^{}) = \tau(Y_j^{})_0^{} \ge m + 2$.
As noted above, this inequality allows us to apply Corollary~\ref{HZN}(i).
We deduce that $\dim_\CC^{} \HH^1(\I_Z^{}(m, n)) = k$.
This proves the reverse inclusion.
The union is disjoint because we cannot have the inequalities
$\length(Z \cap D_i^{}) \ge n + 2$ and $\length(Z \cap E_j^{}) \ge m + 2$ at the same time.
Indeed,
\[
m + n + 2 \ge l + 1 \ge \length(Z \cap (D_i^{} \cup E_j^{})) + 1 \ge \length(Z \cap D_i^{}) + \length(Z \cap E_j^{}).
\]
According to Proposition~\ref{T}(i), $T(k, l, l) = \emptyset$, hence $S(k, l, n) = \BN(k, l, (l, n))$,
and hence $S(k, l, n)$ is locally closed.
According to Proposition~\ref{S}(i), $S(k, l, l) = \emptyset$, hence $T(k, l, m) = \BN(k, l, (m, l))$,
and hence $T(k, l, m)$ is locally closed.
\end{proof}

\noindent
Combining Theorem~\ref{BN}, Proposition~\ref{S} and Proposition~\ref{T}
we can describe more explicitly the highest Brill-Noether locus.

\begin{corollary}
\label{highest}
We adopt the assumptions of Theorem~\ref{BN}.
We denote
\[
k_{\mathrm{max}}^{} = \max \{ l - m - 1, \, l - n - 1, \, 0 \}.
\]
For $k > k_{\mathrm{max}}^{}$, we claim that $\BN(k, l, (m, n)) = \emptyset$.
If $k_{\mathrm{max}}^{} > 0$, then we claim that
\[
\BN(k_{\mathrm{max}}^{}, l, (m, n)) =
\begin{cases}
\Ver(l, l) & \text{if $m > n$}, \\
\Hor(l, l) & \text{if $m < n$}, \\
\Ver(l, l) \sqcup \Hor(l, l) & \text{if $m = n$}.
\end{cases}
\]
\end{corollary}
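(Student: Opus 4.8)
The plan is to deduce everything from Theorem~\ref{BN} together with Propositions~\ref{S} and \ref{T}, using the explicit bounds on the index $k$ provided there. First I would observe that by Theorem~\ref{BN}, the locus $\BN(k, l, (m,n))$ is the disjoint union $S(k, l, n) \sqcup T(k, l, m)$. By Proposition~\ref{S}(i), $S(k, l, n)$ is empty unless $l > n+1$ and $1 \le k \le l - n - 1$; by Proposition~\ref{T}(i), $T(k, l, m)$ is empty unless $l > m+1$ and $1 \le k \le l - m - 1$. Hence $\BN(k, l, (m,n)) = \emptyset$ whenever $k$ exceeds $\max\{l - m - 1, l - n - 1\}$, and also whenever that maximum is non-positive (i.e.\ $k_{\mathrm{max}}^{} = 0$), since then neither summand can be nonempty for $k \ge 1$. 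This establishes the first claim.

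Next, assume $k_{\mathrm{max}}^{} > 0$. I would split into the three cases. If $m > n$, then $l - n - 1 > l - m - 1$, so $k_{\mathrm{max}}^{} = l - n - 1$. For $k = k_{\mathrm{max}}^{} = l - n - 1$, the locus $T(k, l, m)$ is empty: indeed $k = l - n - 1 > l - m - 1$ exceeds the range allowed by Proposition~\ref{T}(i). So $\BN(k_{\mathrm{max}}^{}, l, (m,n)) = S(l - n - 1, l, n)$, and by Proposition~\ref{S}(ii) this equals $\Ver(l, l)$ (note $l > n + 1$ is guaranteed here because $l - n - 1 = k_{\mathrm{max}}^{} > 0$). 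The case $m < n$ is entirely symmetric, swapping the roles of the two $\PP^1$ factors: $k_{\mathrm{max}}^{} = l - m - 1$, the summand $S$ vanishes by Proposition~\ref{S}(i), and Proposition~\ref{T}(ii) gives $\Hor(l, l)$.

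Finally, in the case $m = n$, we have $l - m - 1 = l - n - 1 = k_{\mathrm{max}}^{}$. Here both summands contribute: $S(k_{\mathrm{max}}^{}, l, n) = S(l - n - 1, l, n) = \Ver(l, l)$ by Proposition~\ref{S}(ii), and $T(k_{\mathrm{max}}^{}, l, m) = T(l - m - 1, l, m) = \Hor(l, l)$ by Proposition~\ref{T}(ii), and the union is disjoint by Theorem~\ref{BN}. (Again $l > n+1$ follows from $k_{\mathrm{max}}^{} > 0$, so the hypotheses of parts~(ii) are met.) I do not anticipate a genuine obstacle here; the only point requiring a little care is bookkeeping with the inequalities $l > n+1$ versus $l > m+1$ to be sure the relevant parts of Propositions~\ref{S} and \ref{T} apply, and checking that in each of the first two cases the ``wrong'' summand really does fall outside the nonemptiness range of Proposition~\ref{S}(i) or \ref{T}(i). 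One should also note explicitly that the displayed formula only asserts a value when $k_{\mathrm{max}}^{} > 0$, so there is nothing to prove when $k_{\mathrm{max}}^{} = 0$ beyond the emptiness already covered.
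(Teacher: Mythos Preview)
Your argument is correct and follows exactly the route the paper indicates: the corollary is stated there without proof, merely as the result of ``combining Theorem~\ref{BN}, Proposition~\ref{S} and Proposition~\ref{T}'', and your case analysis is precisely that combination spelled out. The bookkeeping with the inequalities is handled correctly, including the check that $l > n+1$ (resp.\ $l > m+1$) holds in the cases where you invoke part~(ii) of the relevant proposition.
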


\begin{corollary}
\label{BNDE}
We adopt the assumptions of Theorem~\ref{BN}.
We further assume that $l \le 2 m + 3$ and $l \le 2 n + 3$.
Then, for every $k \ge 1$,
\[
\BN(k, l, (m, n)) = \Ver(k + n + 1, l) \sqcup \Hor(k + m + 1, l).
\]
\end{corollary}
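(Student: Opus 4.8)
The plan is to combine Theorem~\ref{BN} with the explicit descriptions of the sets $S$ and $T$ furnished by Proposition~\ref{S}(iii) and Proposition~\ref{T}(iii). First I would check that the extra hypotheses $l \le 2m+3$ and $l \le 2n+3$ are exactly what is needed to invoke parts (iii) of those two propositions: under $l \le 2n+3$ we have $S(k,l,n) = \Ver(k+n+1,l)$ for every $k \ge 1$, and under $l \le 2m+3$ we have $T(k,l,m) = \Hor(k+m+1,l)$ for every $k \ge 1$. Substituting these identities into the locally closed decomposition $\BN(k,l,(m,n)) = S(k,l,n) \sqcup T(k,l,m)$ of Theorem~\ref{BN} yields the claimed formula immediately.

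The one point that genuinely requires attention is the disjointness of the union $\Ver(k+n+1,l) \sqcup \Hor(k+m+1,l)$, i.e.\ that no scheme $Z$ of length $l$ can simultaneously satisfy $\length(Z \cap D) = k+n+1$ for a vertical line $D$ and $\length(Z \cap E) = k+m+1$ for a horizontal line $E$. This is not a new argument: it is precisely the disjointness already established in the proof of Theorem~\ref{BN}, where one uses that $D_i^{}$ and $E_j^{}$ meet in a single point, so that
\[
m + n + 2 \ge l + 1 \ge \length(Z \cap (D \cup E)) + 1 \ge \length(Z \cap D) + \length(Z \cap E) = (k+n+1) + (k+m+1),
\]
forcing $2k \le 0$, contradicting $k \ge 1$. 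Thus the two pieces are automatically disjoint, and one could alternatively just observe that disjointness of $S(k,l,n) \sqcup T(k,l,m)$ is part of the assertion of Theorem~\ref{BN} and is preserved under the set-theoretic identifications above.

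I expect no real obstacle here; the corollary is essentially a specialization and unwinding of Theorem~\ref{BN}. The only care needed is bookkeeping: one should note that when $k > k_{\mathrm{max}}^{}$ in the sense of Corollary~\ref{highest} the sets $\Ver(k+n+1,l)$ or $\Hor(k+m+1,l)$ may individually be empty — e.g.\ $\Ver(k+n+1,l) = \emptyset$ once $k+n+1 > l$ — but the stated formula remains correct as an equality of (possibly empty) locally closed subsets, so no case distinction is actually required in the statement. I would therefore write the proof in two sentences: apply Proposition~\ref{S}(iii) and Proposition~\ref{T}(iii) to rewrite the two summands in Theorem~\ref{BN}, and note that the union is disjoint by the computation displayed in the proof of Theorem~\ref{BN}.
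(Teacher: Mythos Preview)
Your proposal is correct and follows exactly the argument the paper intends: the corollary is stated without proof in the paper, being an immediate consequence of Theorem~\ref{BN} together with Proposition~\ref{S}(iii) and Proposition~\ref{T}(iii), and your write-up spells this out (with the disjointness inherited from Theorem~\ref{BN}).
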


\section{The Euler characteristic of the flag Hilbert schemes}
\label{euler}

\noindent
For a scheme $S$ of finite type over $\CC$, we denote by $\euler(S)$ the topological Euler characteristic
of the complex analytic space associated to $S$.
Using straightforward arguments, we reduce the computation of $\euler(\Hilb(l, (m, n)))$
to the computation of the Euler characteristic of the Brill-Noether loci occurring in decomposition~\eqref{decomposition}.

\begin{proposition}
\label{BNHE}
Let $l \ge 1$, $m \ge 0$ and $n \ge 0$ be integers such that $m$ and $n$ are not both zero.
We claim that
\[
\euler(\Hilb(l, (m, n))) = (mn + m + n + 1 - l)\euler(\Hilb(l)) + \sum_{k \ge 1} k \euler(\BN(k, l, (m, n))).
\]
\end{proposition}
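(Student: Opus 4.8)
The plan is to analyze the forgetful morphism $\pi \colon \Hilb(l, (m, n)) \to \Hilb(l)$, $(Z, C) \mapsto Z$, and to use the multiplicativity of the topological Euler characteristic in fibrations together with the stratification of $\Hilb(l)$ by the Brill--Noether loci. The fiber of $\pi$ over a point $Z$ is the linear system of curves of bidegree $(m, n)$ containing $Z$; as a projective space it is $\PP\bigl(\HH^0(\I_Z(m, n))\bigr)$, which has dimension
\[
\dim_\CC \HH^0(\I_Z(m, n)) - 1 = \dim_\CC \HH^0(\Osh(m, n)) - l + \dim_\CC \HH^1(\I_Z(m, n)) - 1,
\]
where the middle equality uses the definition of the Hilbert function of $Z$ and the fact that for a zero-dimensional scheme $\dim_\CC \HH^0(\I_Z(m,n)) - \dim_\CC\HH^0(\Osh(m,n)) = \dim_\CC\HH^1(\I_Z(m,n)) - l$ (Riemann--Roch / the long exact sequence of $0 \to \I_Z(m,n) \to \Osh(m,n) \to \Osh_Z \to 0$, using $\HH^1(\Osh(m,n)) = \HH^2(\Osh(m,n)) = 0$). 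Since $\dim_\CC\HH^0(\Osh(m,n)) = (m+1)(n+1) = mn + m + n + 1$, the fiber over $Z \in \BN(k, l, (m, n))$ is a projective space of dimension $mn + m + n + 1 - l + k - 1 = mn + m + n - l + k$, hence has Euler characteristic $mn + m + n - l + k + 1 = (mn + m + n + 1 - l) + k$.

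Next I would invoke the standard additivity of $\euler$ over a finite locally closed partition together with multiplicativity over a Zariski-locally-trivial (or, more generally, over any) fibration with constant-dimensional projective-space fibers. Concretely, restrict $\pi$ to $\pi^{-1}(\BN(k, l, (m, n)))$; over this stratum the fibers are all projective spaces of the fixed dimension $mn + m + n - l + k$, so
\[
\euler\bigl(\pi^{-1}(\BN(k, l, (m, n)))\bigr) = \bigl((mn + m + n + 1 - l) + k\bigr)\,\euler\bigl(\BN(k, l, (m, n))\bigr).
\]
Summing over $k \ge 0$ using the decomposition~\eqref{decomposition} and the additivity of $\euler$ gives
\[
\euler(\Hilb(l, (m, n))) = \sum_{k \ge 0} \bigl((mn + m + n + 1 - l) + k\bigr)\,\euler(\BN(k, l, (m, n))),
\]
and splitting the sum into the $(mn+m+n+1-l)\sum_{k\ge 0}\euler(\BN(k,l,(m,n))) = (mn+m+n+1-l)\euler(\Hilb(l))$ part and the $\sum_{k\ge 1} k\,\euler(\BN(k,l,(m,n)))$ part (the $k=0$ term drops out of the second sum) yields the claimed formula.

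The main obstacle is justifying the multiplicativity of $\euler$ over the restricted morphism $\pi^{-1}(\BN(k,l,(m,n))) \to \BN(k,l,(m,n))$: a priori this is not a Zariski-locally-trivial bundle, only a morphism all of whose fibers are isomorphic to a fixed $\PP^N$. The cleanest fix is to note that $\euler$ of a complex algebraic variety equals the alternating sum of compactly-supported Betti numbers and is motivic, so it is additive over locally closed partitions and, for any morphism, $\euler$ of the total space equals the integral of the fiberwise Euler characteristic against $\euler$ as a measure; since here the fiberwise Euler characteristic is the constant $N+1$, this integral is exactly $(N+1)\euler$ of the base. (Alternatively, one can stratify $\Hilb(l)$ further so that $\pi$ becomes an honest locally trivial $\PP^N$-bundle over each piece — which is possible since a flat family of linear systems of fixed dimension is Zariski-locally trivial after passing to a suitable stratification — and then the standard $\euler(\PP^N \times B) = (N+1)\euler(B)$ applies stratum by stratum and recombines by additivity.) Either way, this is the only non-formal point; everything else is bookkeeping with the long exact cohomology sequence and the definition of the Hilbert function.
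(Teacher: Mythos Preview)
Your proof is correct and follows essentially the same approach as the paper's: both use the forgetful morphism to $\Hilb(l)$, identify the fiber over $Z$ with $\PP(\HH^0(\I_Z(m,n)))$, compute its Euler characteristic via the short exact sequence $0 \to \I_Z(m,n) \to \Osh(m,n) \to \Osh_Z \to 0$ (using $\HH^1(\Osh(m,n)) = \HH^2(\Osh(m,n)) = 0$ for $m,n \ge 0$), and then sum over the Brill--Noether strata. The paper's version is terser---it simply writes $\euler(\phi^{-1}(Z)) = \dim_\CC \HH^0(\I_Z(m,n))$ and sums---while you spell out the multiplicativity justification that the paper leaves implicit; but there is no substantive difference.
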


\begin{proof}
Consider $Z \in \Hilb(l)$.
From the short exact sequence
\[
0 \lra \I_Z^{}(m, n) \lra \Osh(m, n) \lra \Osh_Z^{} \lra 0
\]
we obtain the relation
\[
\dim_\CC^{} \HH^0(\I_Z^{}(m, n)) = \dim_\CC^{} \HH^1(\I_Z^{}(m, n)) + m n + m + n + 1 - l.
\]
Let $\phi \colon \Hilb(l, (m, n)) \to \Hilb(l)$ be the forgetful morphism.
For $Z \in \BN(k, l, (m, n))$ we have the relation
\[
\euler(\phi^{-1}(Z)) = \dim_\CC^{} \HH^0(\I_Z^{}(m, n)) = k + m n + m + n + 1 - l.
\]
Taking into account the decomposition~\eqref{decomposition}, we calculate:
\begin{align*}
\euler & (\Hilb(l, (m, n))) = \sum_{k \ge 0} (k + m n + m + n + 1 - l) \euler(\BN(k, l, (m, n))) \\
& = (m n + m + n + 1 - l)\sum_{k \ge 0} \euler(\BN(k, l, (m, n))) + \sum_{k \ge 0} k \euler(\BN(k, l, (m, n))) \\
& = (m n + m + n + 1 - l)\euler(\Hilb(l)) + \sum_{k \ge 1} k \euler(\BN(k, l, (m, n))). \qedhere
\end{align*}
\end{proof}

\noindent
Let $E \subset \PP^1 \times \PP^1$ be a line of bidegree $(0, 1)$.
Let $e \ge 1$ be an integer and let $\epsilon = (e_1^{}, \dots, e_h^{})$ be a partition of $e$.
Consider the discriminant locus
\[
\Delta(\epsilon) = \{ e_1^{} p_1^{} + \dots + e_{h_{}}^{} p_h^{} \mid \text{$p_i^{} \in E$ are mutually distinct} \}
\subset | \Osh_E^{}(e, 0) | \simeq \PP^{}{}^e.
\]
Consider the open subvariety
\[
U_h^{} = \{ (p_1^{}, \dots, p_h^{}) \mid \text{$p_i^{} \in E$ are mutually distinct} \} \subset E^h.
\]
The map $U_h^{} \to \Delta(\epsilon)$ given by $(p_1^{}, \dots, p_h^{}) \mapsto e_1^{} p_1^{} + \dots + e_{h_{}}^{} p_h^{}$
is a geometric quotient modulo the action of a certain subgroup $\Sigma_\epsilon^{}$
contained in the group of permutations of $h$ elements.
For $h \ge 3 $ we have $\euler(U_h^{}) = 0$.
We obtain the formulas
\begin{alignat*}{2}
\euler(\Delta(\epsilon)) & = \frac{1}{| \Sigma_\epsilon^{} |} \euler(U_h^{}) = 0 & \qquad & \text{if $h \ge 3$}, \\
\euler(\Delta(\epsilon)) & = \euler(U_2^{}) = 2 && \text{if $\epsilon = (e_1^{}, e_2^{})$ with $e_1^{} > e_2^{}$}, \\
\euler(\Delta(\epsilon)) & = \frac{1}{2} \euler(U_2^{}) = 1 && \text{if $\epsilon = (e_1^{}, e_2^{})$ with $e_1^{} = e_2^{}$}, \\
\euler(\Delta(\epsilon)) & = \euler(U_1^{}) = 2 && \text{if $\epsilon = (e)$}.
\end{alignat*}
Given a partition $\tau = (t_0^{}, \dots, t_{\nu - 1}^{}) \in \Pi(l)$ we consider the locally closed subset
\[
\Hilb_{\nu E}^{}(\tau) = \{ Z \mid Z \subset \nu E,\ \tau(Z) = \tau \} \subset \Hilb(l).
\]
The number $\bip(\tau)$ of bipartitions of $\tau$ is the number of pairs $(\tau', \tau'')$ such that $\tau = \tau' + \tau''$,
where $\tau'$ and $\tau''$ are non-increasing strings of length $\nu$ of non-negative integers.
Notice that
\[
\bip(\tau) = (t_0^{} - t_1^{} + 1) \cdots (t_{\nu - 2}^{} - t_{\nu - 1}^{} + 1) (t_{\nu - 1}^{} + 1).
\]

\begin{proposition}
\label{bipartitions}
We adopt the above notations.
We claim that
\[
\euler(\Hilb_{\nu E}^{}(\tau)) = \bip(\tau).
\]
\end{proposition}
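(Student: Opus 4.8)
The plan is to put a suitable $\CC^*$-action on $\Hilb_{\nu E}(\tau)$ and to use the standard fact that for an algebraic $\CC^*$-action on a complex variety $X$ one has $\euler(X) = \euler(X^{\CC^*})$; it then remains to check that the fixed locus is a finite set whose cardinality equals the number of bipartitions of $\tau$.

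First I would fix coordinates so that $E = \{z = 0\}$, so that $\nu E \simeq \PP^1 \times \Spec \CC[u]/(u^\nu)$ with $u = z/w$ and the $\PP^1$ carrying the coordinates $(x : y)$; the invariant $\rho(-, \nu E)$ of section~\ref{line} is computed with respect to this structure. I take the $\CC^*$-action on $\PP^1 \times \PP^1$ given by $a \cdot ((x : y), (z : w)) = ((a x : y), (z : w))$. It preserves $E$ and $\nu E$ as subschemes, hence induces an action on $\Hilb(l)$ preserving the closed subset $\{Z : Z \subset \nu E\}$; and since $\rho(-, \nu E)$ is invariant under automorphisms of $\PP^1 \times \PP^1$ preserving $\nu E$ and $E$, the action restricts to $\Hilb_{\nu E}(\tau)$. (That $\Hilb_{\nu E}(\tau)$ is a locally closed subvariety of $\Hilb(l)$, so that its Euler characteristic makes sense, follows from the upper semicontinuity of $Z \mapsto \length(Z \cap E)$ together with the recursive definition of $\rho$.)

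Next I would describe the fixed locus. A $\CC^*$-invariant finite subscheme $Z \subset \nu E$ has support contained in the two fixed points $p_0 = (0 : 1)$ and $p_\infty = (1 : 0)$ of the action on the reduced line $E \simeq \PP^1$ (all other orbits in $E$ being infinite), so $Z = Z^{(0)} \sqcup Z^{(\infty)}$ with $Z^{(q)}$ concentrated at $p_q$; invariance of $Z^{(q)}$ means that its ideal in the local ring $\CC[x', u]/(u^\nu)$ (with $x'$ the local coordinate on $E$) is homogeneous in $x'$, and such ideals of finite colength are exactly $\bigoplus_{i \ge 0} (u^{c_i}) x'^i$ with $\nu \ge c_0 \ge c_1 \ge \cdots \ge 0$, i.e.\ are indexed by partitions $\lambda = (c_0, c_1, \dots)$ with all parts $\le \nu$. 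The key point is that $\rho(Z^{(q)}, \nu E) = \lambda'$ (the conjugate partition, padded with zeros to length $\nu$). I would prove this by induction on $\nu$: the base case $\nu = 1$ is the identification of the Hilbert scheme of $t_0$ points on $E \simeq \PP^1$ with $\PP^{t_0}$; for the inductive step one uses Proposition~\ref{ZL} to identify the residual scheme $Z'$ with the $\CC^*$-fixed scheme at $p_q$ in $(\nu - 1) E$ attached to the partition obtained from $\lambda$ by decreasing every nonzero part by $1$, whence $\rho(Z', (\nu - 1)E) = (\lambda'_2, \dots, \lambda'_\nu)$, while $\length(Z^{(q)} \cap E) = \ell(\lambda) = \lambda'_1$. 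Combining this with the additivity $\rho(Z^{(0)} \sqcup Z^{(\infty)}, \nu E) = \rho(Z^{(0)}, \nu E) + \rho(Z^{(\infty)}, \nu E)$ — immediate since every scheme occurring in the definition of $\rho$ splits compatibly over a disjoint union — shows that the fixed points of $\Hilb_{\nu E}(\tau)$ are in bijection, via $(Z^{(0)}, Z^{(\infty)}) \mapsto (\rho(Z^{(0)}, \nu E), \rho(Z^{(\infty)}, \nu E))$, with the pairs $(\tau', \tau'')$ of non-increasing non-negative length-$\nu$ strings satisfying $\tau' + \tau'' = \tau$, i.e.\ with the bipartitions of $\tau$. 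Therefore $\euler(\Hilb_{\nu E}(\tau)) = \euler((\Hilb_{\nu E}(\tau))^{\CC^*}) = \bip(\tau)$.

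The step I expect to be the main obstacle is the inductive identification $\rho(Z^{(q)}, \nu E) = \lambda'$: one must track the indexing conventions for $\rho$ carefully (it returns a string of fixed length $\nu$, possibly with trailing zeros, whereas elements of $\Pi(l)$ have only positive parts) and read off correctly, from Proposition~\ref{ZL}, the structure sheaf of the residual scheme $Z'$ as an $\Osh_{Z^{(q)}}$-module, that is, which partition it corresponds to. Once the fixed locus has been identified, the rest is formal.
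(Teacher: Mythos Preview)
Your argument is correct and takes a genuinely different route from the paper's.

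The paper proves the proposition by considering the restriction morphism $\psi\colon \Hilb_{\nu E}(\tau)\to \Hilb_E(t_0)\simeq \PP^{t_0}$, $Z\mapsto Z\cap E$, stratifying the target by the discriminant loci $\Delta(\epsilon)$, and computing the Euler characteristic of each fibre using the Ellingsrud--Str{\o}mme affine decomposition of the punctual Hilbert scheme (Proposition~\ref{affine}). Because $\euler(\Delta(\epsilon))=0$ whenever $\epsilon$ has at least three parts, only the strata with $h\le 2$ points on $E$ contribute, and a short bookkeeping with the $h=1$ and $h=2$ terms yields $\bip(\tau)$.

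Your approach bypasses section~\ref{punctual} entirely: instead of fibring over $\PP^{t_0}$ and invoking the Bia{\l}ynicki-Birula cells for the \emph{transverse} $\CC^*$-action inside each punctual piece, you act by $\CC^*$ \emph{along} $E$ on the whole of $\Hilb_{\nu E}(\tau)$ at once and count fixed points directly. The two arguments are morally the same phenomenon viewed from different angles --- the two torus-fixed points $p_0,p_\infty\in E$ in your picture play the same role as the ``only $h\le 2$ survive'' step in the paper's computation --- but yours is more self-contained, since the only nontrivial input is the inductive identification $\rho(Z^{(q)},\nu E)=\lambda'$, which you establish from Proposition~\ref{ZL}. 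The paper's route, on the other hand, makes explicit the fibration structure over $\PP^{t_0}$ and the link to the affine cells $A(\tau)$, information that is used again (in spirit) in the proof of Proposition~\ref{STE}.

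One small remark on your write-up: in the inductive step you should record that the isomorphism $\I_{Z_0,Z}=u\,\Osh_Z\simeq \Osh_{Z'}$ is one of $\Osh_{\nu E}$-modules (division by $u$), so that $Z'$ really is the monomial scheme in $(\nu-1)E$ attached to $(c_0-1,c_1-1,\dots)$; this is implicit in your appeal to Proposition~\ref{ZL} but worth one line, since it is exactly what makes the induction go through.
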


\begin{proof}
Consider the morphism of schemes
\[
\psi \colon \Hilb_{\nu E}^{}(\tau) \lra \Hilb_E^{}(t_0^{}) \simeq \PP^{}{}^{t_0}
\]
given on closed points by $\psi(Z) = Z \cap E$.
Consider $Z_0^{} = e_1^{} p_1^{} + \dots + e_{h_{}}^{} p_h^{}$ in $\Hilb_E^{}(t_0^{})$.
Here $\epsilon = (e_1^{}, \dots, e_h^{})$ is a partition of $t_0^{}$
and $p_1^{}, \dots, p_h^{}$ are mutually distinct points of $E$.
Then $\psi(Z) = Z_0^{}$ if and only if $Z = P_1^{} \cup \dots \cup P_h^{}$,
where $P_i^{}$ is a subscheme of $\nu E$ that is concentrated at $p_i^{}$,
such that $\tau(P_i^{})_0^{} = e_i^{}$ and $\tau(P_1^{}) + \dots + \tau(P_h^{}) = \tau$.
According to Proposition~\ref{affine}, for a fixed partition $\tau_i^{}$,
the family of schemes $P_i^{} \subset \nu E$ that are concentrated at $p_i^{}$ and such that $\tau(P_i^{}) = \tau_i^{}$
is parametrized by an affine space $A(\tau_i^{})$ of dimension $\sum_{j \ge 1} \tau_{ij}^{}$.
It follows that
\[
\psi^{-1}(Z_0^{})
= \bigsqcup_{\substack{\tau_1 + \dots + \tau_h = \tau \\ \tau_{10} = e_1, \dots, \tau_{h0} = e_h}}
A(\tau_1^{}) \times \dots \times A(\tau_h^{}),
\]
hence
\[
\euler(\psi^{-1}(Z_0^{})) = \Big\vert \big\{ (\tau_1^{}, \dots, \tau_h^{})
\mid \tau_1^{} + \dots + \tau_h^{} = \tau, \ \tau_{10}^{} = e_1^{}, \dots, \ \tau_{h0}^{} = e_h^{} \big\} \Big\vert.
\]
The r.h.s.\ is constant when $Z_0^{}$ varies in the discriminant locus $\Delta(\epsilon) \subset \PP^{}{}^{t_0^{}}$.
We obtain the formulas
\begin{align*}
\euler & (\Hilb_{\nu E}^{}(\tau)) \\
& = \sum_{\epsilon \in \Pi(t_0)} \euler(\Delta(\epsilon))
\Big\vert \big\{ (\tau_1^{}, \dots, \tau_h^{})
\mid \tau_1^{} + \dots + \tau_h^{} = \tau, \ \tau_{10}^{} = e_1^{}, \dots, \ \tau_{h0}^{} = e_h^{} \big\} \Big\vert \\
& = \sum_{\substack{(e_1, e_2) \in \Pi(t_0) \\ e_1 > e_2}}
2 \Big\vert \big\{ (\tau_1^{}, \tau_2^{})
\mid \tau_1^{} + \tau_2^{} = \tau, \ \tau_{10}^{} = e_1^{}, \ \tau_{20}^{} = e_2^{} \big\} \Big\vert \\
& \qquad + \Big\vert \big\{ (\tau_1^{}, \tau_2^{})
\mid \tau_1^{} + \tau_2^{} = \tau, \ \tau_{10}^{} = \tau_{20}^{} \big\} \Big\vert + 2 \\
& = \Big\vert \big\{ (\tau_1^{}, \tau_2^{}) \mid \tau_1^{} + \tau_2^{} = \tau, \ \tau_1^{} \neq 0, \ \tau_2^{} \neq 0 \big\} \Big\vert + 2
= \bip(\tau). \qedhere
\end{align*}
\end{proof}

\noindent
Given positive integers $k$, $l$, $\lambda$, $\nu_1^{}, \dots, \nu_\lambda^{}$ and a non-negative integer $m$,
we define the sets
\begin{align*}
\Theta(k, l, m, \nu_1^{}, \dots, \nu_\lambda^{})
& = \Big\{ (\rho_1^{}, \dots, \rho_\lambda^{}) \mid \rho_i^{} = (r_{i,0}^{}, \dots, r_{i, \nu_i^{} - 1}^{}), \\
& \phantom{= \Big\{ (} r_{i,0}^{} \ge \dots \ge r_{i, \nu_i^{} - 1}^{} > 0, \ r_{ij}^{} \in \ZZ, \\
& \phantom{= \Big\{ (} l = \sum_{1 \le i \le \lambda} \ \sum_{0 \le j \le \nu_i - 1} r_{ij}^{}, \\
& \phantom{= \Big\{ (} k = \sum_{1 \le i \le \lambda} \ \sum_{\substack{0 \le j \le \nu_i - 1 \\ r_{ij} > m + 1}} (r_{ij} - m - 1) \Big\}, \\
\Phi(k, l, m) & = \bigcup_{\nu_1 \ge 1} \Theta(k, l, m, \nu_1^{}), \\
\Psi(k, l, m) & = \bigcup_{\nu_1 \ge 1, \, \nu_2 \ge 1} \Theta(k, l, m, \nu_1^{}, \nu_2^{}).
\end{align*}

\begin{proposition}
\label{STE}
Consider integers $k$, $l \ge 1$ and $m$, $n \ge 0$.
We claim that
\begin{align*}
\euler(S(k, l, n))
& = \sum_{\sigma \in \Phi(k, l, n)} 2 \bip(\sigma)
+ \sum_{(\sigma_1, \sigma_2) \in \Psi(k, l, n)} \bip(\sigma_1^{}) \bip(\sigma_2^{}), \\
\euler(T(k, l, m))
& = \sum_{\tau \in \Phi(k, l, m)} 2 \bip(\tau) + \sum_{(\tau_1, \tau_2) \in \Psi(k, l, m)} \bip(\tau_1^{}) \bip(\tau_2^{}).
\end{align*}
\end{proposition}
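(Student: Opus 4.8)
The plan is to prove the formula for $\euler(S(k,l,n))$; the formula for $\euler(T(k,l,m))$ then follows at once from the symmetry of $\PP^1 \times \PP^1$ exchanging its two factors, which interchanges $\sigma$ with $\tau$, the lines $D$ of bidegree $(1,0)$ with the lines $E$ of bidegree $(0,1)$, and $S(k,l,n)$ with $T(k,l,n)$, while fixing the sets $\Theta$, $\Phi$, $\Psi$. Throughout I use three standard properties of the topological Euler characteristic: it is additive over finite partitions into locally closed (or merely constructible) pieces; it is multiplicative over Zariski-locally trivial fibrations; and $\euler(X/G) = \euler(X)/|G|$ whenever a finite group $G$ acts freely on $X$. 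I also use the ``vertical'' form of Proposition~\ref{bipartitions}: for a line $D$ of bidegree $(1,0)$ and a partition $\sigma$, writing $\Hilb_{\nu D}(\sigma) = \{ W \mid W \subset \nu D,\ \sigma(W) = \sigma \}$, one has $\euler(\Hilb_{\nu D}(\sigma)) = \bip(\sigma)$; this is Proposition~\ref{bipartitions} read off the first ruling of $\PP^1 \times \PP^1$ instead of the second.

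The first step is to stratify $S(k,l,n)$ according to the number of distinct vertical lines meeting the support: $S(k,l,n) = \bigsqcup_{\kappa \ge 1} S^{(\kappa)}$, where $S^{(\kappa)} = \{ Z \in S(k,l,n) \mid \kappa(Z) = \kappa \}$ is constructible and the union is finite since $\kappa(Z) \le l$. Given $Z \in S^{(\kappa)}$, write $M(Z) = \bigcup_{i=1}^{\kappa} \mu_i D_i$ and let $X_i$ be the part of $Z$ concentrated on $D_i$. Then $Z = X_1 \cup \dots \cup X_\kappa$ with pairwise disjoint supports, $X_i \subset \mu_i D_i$, $\mu(X_i) = \mu_i$, $\sum_i \length(X_i) = l$, and — because the right-hand side of \eqref{vertical} is a sum of the contributions $e\bigl(\sigma(X_i)\bigr)$ of the individual components $D_i$, where $e(\rho) := \sum_{\rho_j > n+1} (\rho_j - n - 1)$ — equation \eqref{vertical} is equivalent to $\sum_{i=1}^{\kappa} e\bigl(\sigma(X_i)\bigr) = k$. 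Since each $\sigma(X_i)$ is a partition into $\mu_i$ positive parts, this identifies $S^{(\kappa)}$ with the set of data consisting of $\kappa$ distinct vertical lines $D_1, \dots, D_\kappa$, a tuple $(\sigma_1, \dots, \sigma_\kappa) \in \bigcup_{\mu_1, \dots, \mu_\kappa \ge 1} \Theta(k, l, n, \mu_1, \dots, \mu_\kappa)$, and schemes $W_i \in \Hilb_{\mu_i D_i}(\sigma_i)$, the associated point of $\Hilb(l)$ being $W_1 \cup \dots \cup W_\kappa$.

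The second step is to compute $\euler(S^{(\kappa)})$. Introduce the ordered cover $\widetilde{S^{(\kappa)}} = \{ (Z, (D_1, \dots, D_\kappa)) \mid Z \in S^{(\kappa)},\ \{D_1, \dots, D_\kappa\} \text{ is the set of vertical lines of } Z \}$; the symmetric group $\mathfrak{S}_\kappa$ acts freely on it by permuting labels, with quotient $S^{(\kappa)}$, so $\euler(S^{(\kappa)}) = \euler(\widetilde{S^{(\kappa)}})/\kappa!$. The map $\widetilde{S^{(\kappa)}} \to U_\kappa$, $(Z, (D_i)) \mapsto (D_i)$, onto the space $U_\kappa \subset (\PP^1)^\kappa$ of ordered $\kappa$-tuples of distinct vertical lines, has fibre over $(D_1, \dots, D_\kappa)$ equal to
\[
\bigsqcup_{(\sigma_1, \dots, \sigma_\kappa)} \Hilb_{\mu_1 D_1}(\sigma_1) \times \dots \times \Hilb_{\mu_\kappa D_\kappa}(\sigma_\kappa),
\]
the union over $\bigcup_{\mu_1, \dots, \mu_\kappa \ge 1} \Theta(k, l, n, \mu_1, \dots, \mu_\kappa)$, by the first step. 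Breaking $\widetilde{S^{(\kappa)}}$ into the finitely many locally closed pieces indexed by these tuples, each piece is a Zariski-locally trivial fibration over $U_\kappa$ with fibre $\Hilb_{\mu_1 D}(\sigma_1) \times \dots \times \Hilb_{\mu_\kappa D}(\sigma_\kappa)$, because the action of $\mathrm{PGL}_2$ on the $\PP^1$ of vertical lines is transitive and $\mathrm{PGL}_2 \to \PP^1$ admits Zariski-local sections. Multiplicativity and the vertical Proposition~\ref{bipartitions} then give
\[
\euler(\widetilde{S^{(\kappa)}}) = \euler(U_\kappa) \sum_{(\sigma_1, \dots, \sigma_\kappa)} \bip(\sigma_1) \cdots \bip(\sigma_\kappa).
\]

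For the last step, recall from Section~\ref{euler} that $U_\kappa$, being the ordered configuration space of $\kappa$ points on $\PP^1$, satisfies $\euler(U_1) = \euler(U_2) = 2$ and $\euler(U_\kappa) = 0$ for $\kappa \ge 3$. Hence $\euler(S^{(\kappa)}) = 0$ for $\kappa \ge 3$; $\euler(S^{(1)}) = \euler(\widetilde{S^{(1)}}) = 2 \sum_{\sigma \in \Phi(k,l,n)} \bip(\sigma)$; and $\euler(S^{(2)}) = \frac{1}{2} \euler(\widetilde{S^{(2)}}) = \sum_{(\sigma_1, \sigma_2) \in \Psi(k,l,n)} \bip(\sigma_1) \bip(\sigma_2)$. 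Summing over $\kappa$ gives the claimed formula for $\euler(S(k,l,n))$, and applying the factor-exchanging symmetry gives the one for $\euler(T(k,l,m))$. The step I expect to require the most care is the second one: verifying precisely that \eqref{vertical} decouples over the components $X_i$ (so that the fibres above are indexed exactly by the unions of the sets $\Theta(k, l, n, \mu_1, \dots, \mu_\kappa)$), and that the resulting families over $U_\kappa$ are locally trivial so that Euler characteristics multiply. Once this is settled, the vanishing for $\kappa \ge 3$ and the passage to the free quotient are routine.
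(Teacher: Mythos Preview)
Your argument is correct and follows essentially the same strategy as the paper: stratify by the configuration of (vertical, respectively horizontal) lines supporting $Z$, use Proposition~\ref{bipartitions} on each factor, and exploit that the Euler characteristic of the configuration space of $\kappa$ distinct points on $\PP^1$ vanishes for $\kappa \ge 3$. The only bookkeeping difference is that the paper works with the unordered discriminant loci $\Delta(\nu_1,\dots,\nu_\lambda)$ indexed by partitions (distinguishing the cases $\nu_1>\nu_2$ and $\nu_1=\nu_2$ when $\lambda=2$), whereas you pass to the ordered cover $U_\kappa$ and divide by the free $\mathfrak{S}_\kappa$-action; the two computations match term by term.
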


\begin{proof}
The two formulas are analogous, so we prove only the second one.
Given $Z \in \Hilb(l)$, write $\nu = \nu(Z)$ and $N(Z) = \nu_1^{} E_1^{} \cup \dots \cup \nu_\lambda^{} E_\lambda^{}$,
where $E_1^{}, \dots, E_\lambda^{}$ are distinct lines of bidegree $(0, 1)$ and $(\nu_1^{}, \dots, \nu_\lambda^{}) \in \Pi(\nu)$.
Let $Y_j^{}$ be the subscheme of $Z$ whose reduced support is contained in $E_j^{}$.
The condition that $Z$ belong to $T(k, l, m)$ is equivalent to the condition
\[
(\tau(Y_1^{}), \dots, \tau(Y_\lambda^{})) \in \Theta(k, l, m, \nu_1^{}, \dots, \nu_\lambda^{}).
\]
The subset of $T(k, l, m)$ with fixed $N(Z)$ and with fixed
\[
(\tau(Y_1^{}), \dots, \tau(Y_\lambda^{})) = (\tau_1^{}, \dots, \tau_\lambda^{}) \in \Theta(k, l, m, \nu_1^{}, \dots, \nu_\lambda^{})
\]
is parametrized by $\Hilb_{\nu_1 E_1}^{}(\tau_1^{}) \times \dots \times \Hilb_{\nu_\lambda E_\lambda}^{}(\tau_\lambda^{})$.
Applying Proposition~\ref{bipartitions}, we calculate:
\begin{align*}
\euler(T(k, l, m)) & = \sum_{\nu \ge 1} \ \sum_{(\nu_1, \dots, \nu_\lambda) \in \Pi(\nu)} \euler(\Delta(\nu_1^{}, \dots, \nu_\lambda^{}))
\!\!\!\!\!\!\!\!\!\!\!\!\!\!\!\!\!\!\!\!\!\!\!\! \sum_{(\tau_1, \dots, \tau_\lambda) \in \Theta(k, l, m, \nu_1, \dots, \nu_\lambda)}
\!\!\!\!\!\!\!\!\!\!\!\!\!\!\!\! \bip(\tau_1^{}) \cdots \bip(\tau_\lambda^{}) \\
& = \sum_{\nu \ge 1} \ \sum_{(\nu_1) \in \Pi(\nu)} \ \sum_{\tau_1 \in \Theta(k, l, m, \nu_1)} 2 \bip(\tau_1^{}) \\
& \phantom{= {}} + \sum_{\nu \ge 1} \ \sum_{\substack{(\nu_1, \nu_2) \in \Pi(\nu) \\ \nu_1 > \nu_2}} \
\sum_{(\tau_1, \tau_2) \in \Theta(k, l, m, \nu_1, \nu_2)} 2 \bip(\tau_1^{}) \bip(\tau_2^{}) \\
& \phantom{= {}} + \sum_{\nu \ge 1} \ \sum_{\substack{(\nu_1, \nu_2) \in \Pi(\nu) \\ \nu_1 = \nu_2}} \
\sum_{(\tau_1, \tau_2) \in \Theta(k, l, m, \nu_1, \nu_2)} \phantom{2} \bip(\tau_1^{}) \bip(\tau_2^{}) \\
& = \sum_{\tau \in \Phi(k, l, m)} 2 \bip(\tau) + \sum_{(\tau_1, \tau_2) \in \Psi(k, l, m)} \bip(\tau_1^{}) \bip(\tau_2^{}).
\qedhere
\end{align*}
\end{proof}

\noindent
Given integers $l \ge 1$ and $m \ge 0$, we write
\[
\csi(l, m) = \sum_{1 \le k \le l - m - 1}
\Big(\sum_{\rho \in \Phi(k, l, m)} 2 k \bip(\rho) + \sum_{(\rho_1, \rho_2) \in \Psi(k, l, m)} k \bip(\rho_1^{}) \bip(\rho_2^{})\Big).
\]
Notice that $\csi(l, m) = 0$ if $m \ge l - 1$.
For the last theorem we combine Proposition~\ref{BNHE}, Theorem~\ref{BN}, Proposition~\ref{STE},
Proposition~\ref{S}(i) and Proposition~\ref{T}(i).

\begin{theorem}
\label{HE}
Let $l \ge 2$, $m \ge 0$ and $n \ge 0$ be integers such that $m + n \ge l - 1$.
We claim that
\[
\euler(\Hilb(l, (m, n))) = (mn + m + n + 1 - l)\euler(\Hilb(l)) + \csi(l, m) + \csi(l, n).
\]
\end{theorem}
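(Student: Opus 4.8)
The plan is to assemble Theorem~\ref{HE} by plugging the structural results of the previous two sections into the counting formula of Proposition~\ref{BNHE}. Concretely, Proposition~\ref{BNHE} expresses $\euler(\Hilb(l, (m, n)))$ as $(mn + m + n + 1 - l)\euler(\Hilb(l))$ plus the weighted sum $\sum_{k \ge 1} k\, \euler(\BN(k, l, (m, n)))$, so the entire content of the theorem is the identity
\[
\sum_{k \ge 1} k\, \euler(\BN(k, l, (m, n))) = \csi(l, m) + \csi(l, n).
\]
First I would invoke Theorem~\ref{BN}, valid because $m + n \ge l - 1$ and $l \ge 2$, to write $\BN(k, l, (m, n)) = S(k, l, n) \sqcup T(k, l, m)$ as a disjoint union; since Euler characteristic is additive over disjoint unions of locally closed subsets, this gives
\[
\sum_{k \ge 1} k\, \euler(\BN(k, l, (m, n))) = \sum_{k \ge 1} k\, \euler(S(k, l, n)) + \sum_{k \ge 1} k\, \euler(T(k, l, m)).
\]

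Next I would handle each of the two sums by the same argument, so I would spell it out only for $T$. By Proposition~\ref{T}(i), $T(k, l, m)$ is empty unless $1 \le k \le l - m - 1$ (and, in particular, the whole sum is empty, hence zero, when $m \ge l - 1$, matching the convention $\csi(l, m) = 0$ recorded just before the theorem). For $k$ in this range I would substitute the formula of Proposition~\ref{STE},
\[
\euler(T(k, l, m)) = \sum_{\tau \in \Phi(k, l, m)} 2\bip(\tau) + \sum_{(\tau_1, \tau_2) \in \Psi(k, l, m)} \bip(\tau_1)\bip(\tau_2),
\]
multiply by $k$, and sum over $1 \le k \le l - m - 1$. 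Comparing with the definition
\[
\csi(l, m) = \sum_{1 \le k \le l - m - 1} \Big( \sum_{\rho \in \Phi(k, l, m)} 2k\bip(\rho) + \sum_{(\rho_1, \rho_2) \in \Psi(k, l, m)} k\bip(\rho_1)\bip(\rho_2) \Big)
\]
shows that $\sum_{k \ge 1} k\, \euler(T(k, l, m)) = \csi(l, m)$. The identical computation with $S$, Proposition~\ref{S}(i), and the first formula of Proposition~\ref{STE} yields $\sum_{k \ge 1} k\, \euler(S(k, l, n)) = \csi(l, n)$. Adding the two and combining with Proposition~\ref{BNHE} gives the claimed formula.

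There is no real obstacle here: the theorem is a bookkeeping corollary, and every nontrivial input — the Brill--Noether decomposition, the Euler characteristics of the loci $S$ and $T$, and the emptiness ranges — has already been established. The only point requiring a word of care is the boundary behavior: one must check that the range $1 \le k \le l - m - 1$ appearing in $\csi(l, m)$ is exactly the range on which $T(k, l, m)$ can be nonempty, which is precisely the statement of Proposition~\ref{T}(i), and likewise $\csi(l, n)$ versus Proposition~\ref{S}(i); for $m \ge l - 1$ or $n \ge l - 1$ the corresponding sum is empty and the corresponding $\csi$ term vanishes by convention, so the formula remains correct. Everything else is formal manipulation of finite sums.
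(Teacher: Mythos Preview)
Your proposal is correct and follows exactly the approach indicated in the paper, which simply instructs the reader to combine Proposition~\ref{BNHE}, Theorem~\ref{BN}, Proposition~\ref{STE}, Proposition~\ref{S}(i) and Proposition~\ref{T}(i); you have spelled out precisely this combination, including the boundary checks that the ranges of $k$ in the definition of $\csi$ match the nonemptiness ranges of $S$ and $T$.
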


\noindent
At the end, we illustrate the above theorem for schemes of length at most $8$.
From \cite[Theorem (5.1)]{ellingsrud} or \cite[Theorem 0.1]{goettsche} we read the values of $\euler(\Hilb(l))$,
see Table~\ref{T1}.
The values of $\csi(l, m)$ are easily computed using the definition and are indicated in Table~\ref{T2}.
Substituting these into the above formula,
yields the values given in Table~\ref{T3} and Table~\ref{T4}.

\begin{table}[h]
\begin{center}
\caption{\small The values of $\euler(\Hilb(l))$ for $2 \le l \le 8$.}
\label{T1}
\begin{tabular}{| c | r | r | r | r | r | r | r |}
\hline
$l$ & $2$ & $3$ & $4$ & $5$ & $6$ & $7$ & $8$ \\
\hline
$\euler(\Hilb(l))$ & $\phantom{00}14$ & $\phantom{00}40$ & $\phantom{0}105$ & $\phantom{0}252$ & $\phantom{0}574$ & $1240$ & $2580$ \\
\hline
\end{tabular}
\end{center}
\end{table}

\begin{table}[h]
\begin{center}
\caption{\small The values of $\csi(l, m)$ for $2 \le l \le 8$ and $0 \le m \le l - 2$.}
\label{T2}
\begin{tabular}{| c | r || c | r || c | r || c | r |}
\hline
$(l, m)$ & $\csi(l, m)$ & $(l, m)$ & $\csi(l, m)$ & $(l, m)$ & $\csi(l, m)$ & $(l, m)$ & $\csi(l, m)$ \\
\hline
$(2, 0)$ & $6$ &
$(3, 0)$ & $36$ &
$(3, 1)$ & $8$ &
$(4, 0)$ & $152$ \\
$(4, 1)$ & $48$ &
$(4, 2)$ & $10$ &
$(5, 0)$ & $508$ &
$(5, 1)$ & $160$ \\
$(5, 2)$ & $60$ &
$(5, 3)$ & $12$ &
$(6, 0)$ & $1506$ &
$(6, 1)$ & $652$ \\
$(6, 2)$ & $246$ &
$(6, 3)$ & $72$ &
$(6, 4)$ & $14$ &
$(7, 0)$ & $4024$ \\
$(7, 1)$ & $1896$ &
$(7, 2)$ & $812$ &
$(7, 3)$ & $296$ &
$(7, 4)$ & $84$ \\
$(7, 5)$ & $16$ &
$(8, 0)$ & $10034$ &
$(8, 1)$ & $5024$ &
$(8, 2)$ & $2358$ \\
$(8, 3)$ & $980$ &
$(8, 4)$ & $346$ &
$(8, 5)$ & $96$ &
$(8, 6)$ & $18$ \\
\hline
\end{tabular}
\end{center}
\end{table}

{\small
\begin{table}[h]
\begin{center}
\caption{\small The values of $\euler(\Hilb(l, (m, n)))$ for $3 \le l \le 8$, $m + n \ge l - 1$ and $1 \le m \le n \le l - 2$.}
\label{T3}
\begin{tabular}{| c | r || c | r || c | r |}
\hline
$(l, (m, n))$ & {\tiny $\euler(\Hilb(l, (m, n)))$} &
$(l, (m, n))$ & {\tiny $\euler(\Hilb(l, (m, n)))$} &
$(l, (m, n))$ & {\tiny $\euler(\Hilb(l, (m, n)))$} \\
\hline
$(3, (1, 1))$ & $56$ &
$(4, (1, 2))$ & $268$ &
$(4, (2, 2))$ & $545$ \\
$(5, (1, 3))$ & $928$ &
$(5, (2, 2))$ & $1128$ &
$(5, (2, 3))$ & $1836$ \\
$(5, (3, 3))$ & $2796$ &
$(6, (1, 4))$ & $2962$ &
$(6, (2, 3))$ & $3762$ \\
$(6, (2, 4))$ & $5426$ &
$(6, (3, 3))$ & $5884$ &
$(6, (3, 4))$ & $8122$ \\
$(6, (4, 4))$ & $10934$ &
$(7, (1, 5))$ & $8112$ &
$(7, (2, 4))$ & $10816$ \\
$(7, (2, 5))$ & $14468$ &
$(7, (3, 3))$ & $11752$ &
$(7, (3, 4))$ & $16500$ \\
$(7, (3, 5))$ & $21392$ &
$(7, (4, 4))$ & $22488$ &
$(7, (4, 5))$ & $28620$ \\
$(7, (5, 5))$ & $35992$ &
$(8, (1, 6))$ & $20522$ &
$(8, (2, 5))$ & $28254$ \\
$(8, (2, 6))$ & $35916$ &
$(8, (3, 4))$ & $32286$ &
$(8, (3, 5))$ & $42356$ \\
$(8, (3, 6))$ & $52598$ &
$(8, (4, 4))$ & $44552$ &
$(8, (4, 5))$ & $57202$ \\
$(8, (4, 6))$ & $70024$ &
$(8, (5, 5))$ & $72432$ &
$(8, (5, 6))$ & $87834$ \\
$(8, (6, 6))$ & $105816$ &
& &
& \\
\hline
\end{tabular}
\end{center}
\end{table}
}

\begin{table}[h]
\begin{center}
\caption{\small The values of $\euler(\Hilb(l, (m, n)))$ for $2 \le l \le 8$ and $0 \le m \le l - 2 < n$.}
\label{T4}
\begin{tabular}{| c | r || c | r |}
\hline
$(l, (m, n))$ & $\quad \euler(\Hilb(l, (m, n)))$ &
$(l, (m, n))$ & $\quad \euler(\Hilb(l, (m, n)))$ \\
\hline
$(2, (0, n))$ & $\phantom{000}14^{} n - \phantom{0000}8$ &
$(3, (0, n))$ & $\phantom{000}40^{} n - \phantom{000}44$ \\
$(3, (1, n))$ & $\phantom{000}80^{} n - \phantom{000}32$ &
$(4, (0, n))$ & $\phantom{00}105^{} n - \phantom{00}163$ \\
$(4, (1, n))$ & $\phantom{00}210^{} n - \phantom{00}162$ &
$(4, (2, n))$ & $\phantom{00}315^{} n - \phantom{000}95$ \\
$(5, (0, n))$ & $\phantom{00}252^{} n - \phantom{00}500$ &
$(5, (1, n))$ & $\phantom{00}504^{} n - \phantom{00}596$ \\
$(5, (2, n))$ & $\phantom{00}756^{} n - \phantom{00}444$ &
$(5, (3, n))$ & $\phantom{0}1008^{} n - \phantom{00}240$ \\
$(6, (0, n))$ & $\phantom{00}574^{} n - \phantom{0}1364$ &
$(6, (1, n))$ & $\phantom{0}1148^{} n - \phantom{0}1644$ \\
$(6, (2, n))$ & $\phantom{0}1722^{} n - \phantom{0}1476$ &
$(6, (3, n))$ & $\phantom{0}2296^{} n - \phantom{0}1076$ \\
$(6, (4, n))$ & $\phantom{0}2870^{} n - \phantom{00}560$ &
$(7, (0, n))$ & $\phantom{0}1240^{} n - \phantom{0}3416$ \\
$(7, (1, n))$ & $\phantom{0}2480^{} n - \phantom{0}4304$ &
$(7, (2, n))$ & $\phantom{0}3720^{} n - \phantom{0}4148$ \\
$(7, (3, n))$ & $\phantom{0}4960^{} n - \phantom{0}3424$ &
$(7, (4, n))$ & $\phantom{0}6200^{} n - \phantom{0}2396$ \\
$(7, (5, n))$ & $\phantom{0}7440^{} n - \phantom{0}1224$ &
$(8, (0, n))$ & $\phantom{0}2580^{} n - \phantom{0}8026$ \\
$(8, (1, n))$ & $\phantom{0}5160^{} n - 10456$ &
$(8, (2, n))$ & $\phantom{0}7740^{} n - 10542$ \\
$(8, (3, n))$ & $10320^{} n - \phantom{0}9340$ &
$(8, (4, n))$ & $12900^{} n - \phantom{0}7394$ \\
$(8, (5, n))$ & $15480^{} n - \phantom{0}5064$ &
$(8, (6, n))$ & $18060^{} n - \phantom{0}2562$ \\
\hline
\end{tabular}
\end{center}
\end{table}

\end{document}